%
%
\documentclass[10pt,oneside]{amsart}
\usepackage{color}

\newcommand{\nr}{{|\!|}}


\textheight 20.5cm
\textwidth 17cm
\oddsidemargin -0.5cm
\hsize=6.5 truein
\vsize=8.5 truein
\parindent=1em
\parskip=3pt
\usepackage{amsfonts}
\newcommand{\G}{\mathrm{I}\hspace{-2.3pt}\Gamma}
 \newtheorem{thm}{Theorem}[section]
 \newtheorem{cor}[thm]{Corollary}
 \newtheorem{lem}[thm]{Lemma}
 \newtheorem{prop}[thm]{Proposition}
 \newtheorem{defn}[thm]{Definition }
 \newtheorem{rem}[thm]{Remark}
 
 \newcommand{\0}{L^2(\mathbb{R}^3)}

\begin{document}
\title[]{$L^2$ Analysis of the Multi-Configuration Time-Dependent Hartree-Fock Equations}
\author[Norbert Mauser]{Norbert J. Mauser }
\author[Saber Trabelsi]{Saber Trabelsi}
\address[Norbert Mauser]{ Wolfgang Pauli Inst. c/o Fak. f. 
Mathematik; Univ. Wien Nordbergstrasse, 15, A-1090 
Vienna, Austria } 
\address[Saber Trabelsi]{Wolfgang Pauli Inst. c/o Fak. f. 
Math.; Univ. Wien,
and CEREMADE, Univ. Paris Dauphine}
\subjclass{}
\keywords{}


\begin{abstract}
The multiconfiguration methods are widely used by quantum physicists and chemists for numerical approximation of the 
many electron Schr\"odinger equation. Recently, first mathematically rigorous results were obtained on the 
time-dependent models, e.g. short-in-time well-posedness in the Sobolev space $H^2$ for bounded interactions \cite{l2Lubich1} with initial data in $H^2$,  
in the energy space for Coulomb interactions  with initial data in the same space \cite{l2trabelsi,l2Bardos12}, as well as global well-posedness under a sufficient condition 
on the energy of the initial data \cite{l2Bardos1,l2Bardos12}. The present contribution extends the analysis by setting an $L^2$ theory 
for the MCTDHF for general interactions including the Coulomb case. This kind of results is also the theoretical foundation of ad-hoc methods used in numerical calculation when modification ("regularization") of the density matrix
destroys the conservation of energy property, but keeps invariant the mass.
\end{abstract}
\maketitle
\section{Introduction}
The Multi-Configuration Time-Dependent Hartree-Fock (MCTDHF in short) system aims to describe the time-evolution of systems  composed of fermions (e.g. electrons), that is, particles with half integer spin. More precisely, it represents a hierarchy of approximations of the linear $N$ particle (fermion) Schr\"odinger equation. 
\begin{equation}\label{exact}
i\frac{\partial}{\partial t}\Psi(t,x_1,\ldots,x_N) = H_N\:\Psi(t,x_1,\ldots,x_N),
\end{equation}
where $\Psi$  is the N-particle wavefunction which we normalize to one in $L^2(\mathbb{R}^{3N};\mathbb{C})$ since $|\Psi(t,x_1,\ldots,x_n)|^2$ is interpreted as the probability density of finding the $i^{th}$ particle in the position $x_i$ at time $t$ for all $1\leq i\leq N$. In order to account for the Pauli exclusion principle which feature the fermonic nature of the particles we are dealing with, an antisymmetry condition is imposed to the wavefunction $\Psi$ \textit{i.e.}
\[\Psi(t,x_1,\ldots,x_N)=\epsilon(\tau)\:\Psi(x_{\tau(1)},\ldots, x_{\tau(1)}),\]
for every permutation of $\{1,\ldots,N\}$ with signature $\epsilon(\tau)$. 
The $N$-body Hamiltonian of the system is then the self-adjoint operator $ {H}_N $ acting on the Hilbert 
space $L^2(\mathbb{R}^{3N};\mathbb{C})$ given by 
\begin{equation}\label{Hamiltonian}
{H}_N= \sum_{1\leq i\leq N}\left(-\frac12\Delta_{x_i} + U(x_i)\right)+
\sum_{1\leq i< j\leq N}V(|x_i-x_j|).
\end{equation}
The subscript $x_i$ of $\Delta_{x_i} $ means that the derivation is with respect to the space position of the $i^{th}$ particle
and $ U(x)$ is the "external potential", created, e.g. by nuclei localized at 
fixed positions $ R_m$
with charges $z_m>0$ for $1\leq m \leq M$. 
The last term of ${H}_N$ is the interaction potential between the electrons, which is fundamentally given
by the Couloumb interaction. Hence we have
\begin{equation}\label{potentials}
U(x) = -\sum_{m=1}^M\frac{z_m}{|x-R_m|}\quad\text{and}\quad V(|x-y|) =\frac{1}{|x-y|}.
\end{equation}
We use the so-called atomic units for writing the Hamiltonian $H_N$ and the N-particle Schr\"odinger equation; hence the Planck constant $\hbar$, the mass of the electrons, the elementary charge and the factor $\frac{1}{4\pi\epsilon_0}$ are all set to one with $\epsilon_0$ denotes the dielectric constant of the vacuum. For the sake of simple notation we omit the spin variable, taking the spin into account does not add any mathematical difficulties.

A particular advantage over the simple Time-Dependent Hartree-Fock (TDHF) method (see e.g. \cite{Weeny,l2Chadam,l2Bardos2,l2Bardos12} 
 is that MCTDHF can handle "correlation", an essential concept of many electron systems that vanishes (by definition) 
for TDHF. We refer the reader to \cite{l2Gottlieb,l2Gottlieb2} for more details.
\vskip6pt
MCTDHF methods are widely used for numerical calculation of the dynamics of few electron systems in quantum physics/chemistry (see e.g. \cite{l2Scrinzi1,l2Scrinzi2,l2Scrinzi3,l2Scrinzi} and references therein, also \cite{l2Meyer,l2Meyer2,l2Meyer1,l2Meyer3} for the MCTDH case). From a theoretical point of view, excellent results were obtained for the stationary case \cite{l2Lebris,l2Fries,l2Lewin}. However, the time-dependent case still poses  serious open problems for the mathematical analysis. In fact, even the global-in-time existence of solutions without recourse to extra assumption as in  \cite{l2Bardos1,l2Bardos12} is not proved yet. 
\vskip6pt
The MCTDHF system for the approximate wavefunction $\Psi_{\mathrm{MC}}(t,x_1,...,x_N) $ 
is composed of $K \ge N$ non-linear Schr\"odinger-type evolution equations (for ``the orbitals", 
as a dynamic basis for an expansion by ``Slater determinants") coupled with $r:= {K\choose N}$ first order differential 
equations (for `` the coefficients" $C$). 
The many particle  wavefunction $\Psi$ can be well approximated by such linear combinations of 
Slater determinants, much better than by the simple TDHF method that corresponds to the limiting case $K = N$.
In principle, for fixed $N$, the MCTDHF equations yield a hierarchy of models with increasing accuracy with 
increasing $K$, in the sense that many particle wavefunction constructed from the solution 
of MCTDHF converges (in some sense) toward the exact solution $\Psi$ with increasing $K$. 
However, especially in the time-dependent case, there is no proof for this seemingly ``obvious" property.
\vskip6pt
Let us now formulate the MCTDHF equations which are much more complicated to write down than the 
"usual NLS" like cubic NLS or "Schr\"odinger-Poisson".  
For a short and readable introduction to the multiconfiguration time-dependent Hartree-Fock system, 
we refer the reader to \cite{l2Lubich1,l2trabelsi,l2Bardos1,l2Bardos12},
or \cite{l2Lebris,l2Fries,l2Lewin} focussed on the stationary case. 
First of all, we introduce the set $\mathcal{F}_{N,K}$ of "coefficents and orbitals" $(C,\Phi)$ 
\begin{eqnarray} \label{FNKdef}
\mathcal{F}_{N,K}=\Biggl\{ C=(C_{\sigma})_{\sigma\in \Sigma_{N,K}} \in \mathbb{C}^{r},\quad\sum_{ \sigma\in\Sigma_{N,K}}|C_{\sigma}|^2=1\Biggr\} \times \\  
\Biggl\{ \Phi=(\phi_1,\ldots,\phi_K )\in L^2(\mathbb{R}^3)^K, \quad \int_{\mathbb{R}^3}\phi_i\, \bar\phi_j\,dx=\delta_{i,j}\Biggr\},
\end{eqnarray}
with $ \delta_{i,j}$ being the Kronecker delta, the bar denotes the complex conjugation and 
\begin{equation*}
\Sigma_{N,K}=\Big\{\sigma=\{\sigma(1)<\ldots < \sigma(N)\}\subset\{1,\ldots,K\}\Big\},\qquad  |\Sigma_{N,K}|={K\choose N}:=r.
\end{equation*}
That is, the range of the family of increasing mappings 
$ \sigma:\quad\{1,\ldots,N\}\longrightarrow\{1,\ldots,K\} $ for $ 1\leq N \leq K $ with integers $N$ and $K$ and we shall not make difference between mappings and sets $\sigma$ for simplicity.  Now, given $ \sigma \in \Sigma_{N,K} $, we define the associated {\it Slater determinant} as follows
\begin{equation*}\label{Slater}
\Phi_{\sigma}(x_1,\ldots,x_N)=\frac{1}{\sqrt{N!}} \left|\begin{array}{ccc}
\phi_{\sigma(1)}(x_1)  &  \ldots &  \phi_{\sigma(1)}(x_N) \\
  \vdots &   &  \vdots \\
 \phi_{\sigma(N)}(x_1) &\ldots   &   \phi_{\sigma(N)}(x_N)
\end{array}\right|.
\end{equation*}
That is, the determinant built from the $ \phi_i$'s such that $ i\in \sigma$. 
The $ \phi_i$'s being an orthonormal family, the factor $\frac{1}{\sqrt{N!}} $ ensures then  the normalization $ \|\Phi_\sigma\|_{L^2(\mathbb{R}^{3N})}=1$. 
The MCTDHF wavefunction reads now
\begin{equation}\label{PsiMC}
\Psi_{\mathrm{MC}}(t,x_1,\ldots,x_N) := \Psi_{\mathrm{MC}}(C(t),\Phi(t)) 
= \sum_{\sigma\in \Sigma_{N,K}} \:C_\sigma(t)\:\Phi_\sigma(t,x_1,\ldots,x_N).
\end{equation}
In the sequel, we shall use the same notation for operators acting on $L^2(\mathbb{R}^3)$ 
and diagonal matrices of operators acting on vectors in $L^2(\mathbb{R}^3)^K$. 
The equations of motion associated to the {\it ansatz} \eqref{PsiMC} correspond to a variational procedure 
combined with space rotations thanks to the gauge invariance of the system we are dealing with. We refer to \cite{l2Meyer,l2Lubich1} and particulary to \cite{l2Bardos12}. 
In our formulation we obtain the following coupled system :\\ 
\begin{equation} \label{Sdef}
\mathcal{S}:\quad \left\lbrace 
\begin{array}{ll}
  & i\frac{d}{dt}\:C (t) = \mathbb{K}[\Phi](t)\:C(t),        \\
  &      \\
  &i\:\G[C(t)]\:\frac{\partial}{\partial t}\:\Phi(t,x) = \G[C(t)]\:\left[-\frac{1}{2}\Delta_x +U(x) \right]\:\Phi(t,x) + (I-\mathbf{P}_\Phi)\:\mathbb{W}[C,\Phi](t,x)\:\Phi(t,x).      
\end{array}
\right.
\end{equation}
\indent \\ Let us now define the different operators and matrices involved in the system $\mathcal{S}$. 
First, $\mathbb{K}[\Phi]$ denotes an $r\times r$ Hermitian matrix depending only on the orbitals $\phi_i$
and the binary interaction $V$ 
and is indexed by the sets $\sigma,\tau \in \Sigma_{N,K}$ as follows
\begin{eqnarray} \label{Kdef}
\mathbb{K}[\Phi]_{\sigma,\tau}(t)= \frac12\:
\sum_{i,j\in\tau,\;k,l \in \sigma}&&\:(1-\delta_{i,j})(1-\delta_{k,l})\:\delta_{\tau\setminus\{i,j\},\sigma\setminus\{k,l\}}(-1)^{\tau}_{i,j}\;(-1)^{\sigma}_{k,l}\times\\&\times&\int_{\mathbb{R}^3\times \mathbb{R}^3 }
\phi_i(t,x)\:\overline{\phi}_k(t,x) \:V(|x-y|)\: \phi_j(t,y)\:\overline{\phi}_l(t,y)\:\:dx\: dy,
\end{eqnarray}
where the symbols $(-1)^{\sigma}_{k,l} = \pm1$ are not relevant for analysis and are given by
\begin{equation}\label{def-sigmaij}
 (-1)^{\sigma}_{k,l}=
\left\{
\begin{array}{ll}
(-1)^{\sigma^{-1}(k) +\sigma^{-1}(l) +1} & \mathrm{if}  \,\, k<l, \\
 \\
(-1)^{\sigma^{-1}(k) +\sigma^{-1}(l)} & \mathrm{if}\,\,  k>l,
  \end{array}
\right.
 \end{equation}
with $\sigma^{-1}(k)$ being the position of the entry $k$ in the set $\sigma$.
\vskip6pt
The matrix $\G[C(t)] $ is a $K\times K$ Hermitian "density matrix" depending only on the coefficients as follows \\
\begin{equation}\label{densitymatrix}
\G[C(t)]_{i,j} = \sum_{\stackrel{\sigma,\tau \in \Sigma_{N,K}}{\sigma\setminus\{i\}= \tau\setminus\{j\}}} (-1)^{\sigma^{-1}(i) + \tau^{-1}(j)} \:\overline{C}_\sigma(t)\:{C}_{\tau}(t).
\end{equation}
The projection operator $\mathbf{P}_\Phi$ is given by
\begin{equation}\label{Pdef}
\mathbf{P}_\Phi(\psi)=
\sum_{i=1}^K \int_{\mathbb{R}^3}\:\psi(t,x)\:\overline{\phi}_{i}(t,x)\:dx\: \phi_i(t,x),
\end{equation}
for all $\psi \in \0$. That is the orthogonal projector onto the space spanned by the $\phi_i'$s. 
Finally, the $K\times K$ Hermitian matrix $\mathbb{W}[C,\Phi](t,x)$ is given by the entries  
\begin{equation} \label{Wdef}
{\mathbb{W}[C,\Phi]} _{i j}(x) = 2\:\sum_{k,l=1}^K \gamma_{jkil} \,\int_{\mathbb{R}^3}\:\phi_k(y)\:V(|x-y|)\:\overline{\phi}_l(y)\:dy, 
\end{equation}
with 
\begin{equation} \label{gammaijkldef}
\gamma_{ijkl}=\frac 12\:(1-\delta_{i,j})(1-\delta_{k,l})\: \sum_{\substack 
{\sigma,\tau\:\vert \:i,j\in \sigma,\,k,l\in\tau
\\ 
\sigma\setminus\{i,j\}= \tau\setminus\{k,l\} 
}}
(-1)^{\sigma}_{i,j} (-1)^{\tau}_{k,l} \:{C}_{\sigma}\: \overline{C}_\tau.
\end{equation}
Observe that the potential $V$ appears in the definition of $\mathbb{W}[C,\Phi]$ and $\mathbb{K}[\Phi]$, 
but that neither the kinetic energy operator nor the interaction $U$ appear in $\mathbb{K}[\Phi]$ 
and hence in the equation for the $C(t)$.
We refer the reader to \cite{l2Bardos12} for more details on this formulation that corresponds to a carefully chosen gauge. Hence we have to deal with a strongly non-linear coupled system of $r = {K\choose N}$ first order ODEs and $K$ Schr\"odinger-type PDEs. 
\vskip6pt
A preliminary result on the existence and uniqueness of solutions to the MCTDH system $\mathcal{S}$ 
(i.e. the adequate version of the multiconfiguration models for bosons) 
has been established in \cite{l2Lubich1} under the drastic simplification of bounded and smooth interaction potential $V$
(and $U\equiv 0$), where $H^2$ regularity for a solution associated to initial data in the Sobolev space $H^2$
was shown to hold up to a certain positive time.  
Indeed, for such potentials it should be straightforward to get $L^2$ solutions, too.
\vskip6pt
For the case of the Coulomb potential, some results have recently been obtained
in \cite{l2trabelsi,l2Bardos12} with less regularity of initial data, namely in $H^1$. 
 However, all these results are local-in-time in the sense that the existence, uniqueness and regularity persist 
only as long as the density operator associated to the system remains of full rank $K$. 
That is, the matrix $\G[C(t)]$ remains invertible.  In case of a ``loss of rank" at a certain time $T^*$, 
the well-posedness holds only locally-in-time, until $T^*$. 
However, in \cite{l2Bardos12,l2Bardos1} it is shown how the global-in-time existence can be assured under an 
assumption on the energy of the initial state $\Psi_{\mathrm{MC}}(C(t=0),\Phi(t=0))$.
\vskip6pt
The purpose of this contribution is, essentially, to establish an $L^2$ existence and uniqueness result 
of solutions for the Cauchy problem associated to the MCTDHF equations, the system $\mathcal{S}$. 
For the binary interaction $V$ and the external potential $U$ we allow for a large class including 
the Coulomb potential :
\begin{eqnarray} \label{UVcond1}
&& U(|x|),V(|x|)\in L^{p_1}(\mathbb{R}^3,\mathbb{R}) + L^{p_2}(\mathbb{R}^3,\mathbb{R}),\qquad p_1,p_2>\frac32.
\end{eqnarray}
In order to make the proofs less technical, we shall take from now on 
$ U,V\in L^{p}(\mathbb{R}^3,\mathbb{R})$  with $p>\frac32$ omitting the decomposition in two parts. 
One can easily see that the same proof holds {\it mutatis mutandis} in the decomposed case, for
the price of technically heavier and longer formulation of the estimates. 
One more simplification will be used in the sequel, we will usually write $U,V \in L^{p}(\mathbb{R}^3)$ 
with $p>\frac32$, however, this has to be understood in the sense that 
$U \in L^{p}(\mathbb{R}^3,\mathbb{R})$ and $V \in L^{q}(\mathbb{R}^3,\mathbb{R})$ 
with different $p$ and $q$ satisfying $p,q>3/2$. 
\vskip6pt
Observe that the assumptions above hold true in the Coulomb case \eqref{potentials}, 
as can be seen using the following cut-off 
\begin{eqnarray} \label{chidef}
\chi(r) \left\lbrace \begin{array}{l l l}
=1 & for &  0 \leq r \leq 1 \\
\in [0,1] & for &  1 \leq r \leq 2 \\
=0 & for & 2 \leq r \\
\end{array} \right. \quad\text{and set}\quad \left\{
\begin{array}{ll}
  &V_1(|x|)=\frac{\chi(|x|)}{|x|}\in L^p(\mathbb{R}^3)      \\
  &      \\
  &  V_2(|x|)=\frac{1-\chi(|x|)}{|x|}L^\infty(\mathbb{R}^3)    
\end{array}\right.
\end{eqnarray}
with 
\[\frac{3}{2} \leq p < 3.\]
\vskip6pt
Now, let us introduce
\begin{equation}\label{regularfiber}
\partial\mathcal{F}_{N,K} =\bigg\{ (C,\Phi) \in \mathcal{F}_{N,K}\quad :\quad \mathrm{rank}(\G[C]) =K\bigg\}.
\end{equation}
With these definitions, we are able to recall a result from \cite{l2trabelsi,l2Bardos12} that 
will be useful in the sequel. 
\begin{thm}\label{H1thm}
Let $(C^0,\Phi^0) \in \partial\mathcal{F}_{N,K}\cap H^1(\mathbb{R}^3)^K$ be an $H^1$ initial data, 
$U,V \in L^{d}(\mathbb{R}^3)$ with $d>\frac32$. Then, there exists a time $T^\star >0$ such that the MCTDHF system $\mathcal{S}$ admits a unique maximal solution $(C(t),\Phi(t)) $ in
\begin{equation*}
C^1\big([0,T^\star);\mathbb{C}\big)^r \times
\Big(C^1\big([0,T^\star);H^{-1}(\mathbb{R}^3)\big)^K \cap 
C^0\big([0,T^\star);H^{1}(\mathbb{R}^3)\big)^K\Big)
\end{equation*}  
depending continuously on the initial data $(C^0,\Phi^0)$. Moreover, for all $0\leq t< T^\star$, we have 
\begin{itemize}
\item $(C(t),\Phi(t))\in \partial\mathcal{F}_{N,K}$,\\
\item $\left\langle \Psi_{\mathrm{MC}}(C(t),\Phi(t))|H_N|\Psi_{\mathrm{MC}}(C(t),\Phi(t))\right\rangle_{L^2(\mathbb{R}^{3N})} = \left\langle \Psi_{\mathrm{MC}}(C^0,\Phi^0)|H_N|\Psi_{\mathrm{MC}}(C^0,\Phi^0)\right\rangle_{L^2(\mathbb{R}^{3N})}. $
\end{itemize}
Finally, either $T^\star=+\infty$ or $T^\star< +\infty$ and $\int_{0}^{T^\star} \nr \G[C(t)]^{-1}\nr^{3/2} dt=+\infty$. 
In particular
\[\lim\sup_{t\rightarrow T^\star}\:\nr \G[C(t)]^{-1}\nr^{\frac32}=+\infty.\]
\end{thm}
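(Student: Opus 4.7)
The plan is to recast the MCTDHF system $\mathcal{S}$ as a fixed-point problem on the regular fiber $\partial\mathcal{F}_{N,K}$ and to apply Banach's contraction principle on a small ball in $C([0,T];H^1(\mathbb{R}^3))^K \times C([0,T];\mathbb{C}^r)$ for some short $T>0$. On $\partial\mathcal{F}_{N,K}$ the density matrix $\G[C]$ is invertible, so the orbital equation may be inverted to
\begin{equation*}
i\partial_t \Phi = \mathcal{H}_0 \Phi + \G[C]^{-1}(I-\mathbf{P}_\Phi)\,\mathbb{W}[C,\Phi]\,\Phi,\qquad \mathcal{H}_0 := -\frac{1}{2}\Delta + U,
\end{equation*}
while the coefficient equation is the linear ODE $i\dot C = \mathbb{K}[\Phi]\,C$ driven by a Hermitian matrix. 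Both are written in Duhamel form using the unitary propagator $e^{-it\mathcal{H}_0}$. The hypothesis $U\in L^d(\mathbb{R}^3)$ with $d>3/2$ places $U$ in the Kato class, so that $\mathcal{H}_0$ is essentially self-adjoint on $H^2$ and the propagator extends to a strongly continuous group on $H^1$.

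The nonlinear estimates required to close the contraction amount to controlling $\G[C]^{-1}(I-\mathbf{P}_\Phi)\mathbb{W}[C,\Phi]\Phi$ in $H^1$. The matrix $\mathbb{W}[C,\Phi]$ involves convolutions of $V$ with products $\phi_k\overline{\phi_l}$; combining Hardy--Littlewood--Sobolev for $V\in L^p+L^\infty$, $p>3/2$, with the embedding $H^1\hookrightarrow L^6$, such convolutions are bounded in $L^\infty$ by $H^1$ norms of the orbitals, while the coefficients $\gamma_{ijkl}$ are quadratic in $C$ with $\|C\|_{\ell^2}=1$. This provides a contraction on a time interval whose length depends only on $\|\Phi^0\|_{H^1}$, the $L^p$ norms of $U,V$, and $\|\G[C^0]^{-1}\|$. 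Orthonormality of the orbitals is preserved because the projector $I-\mathbf{P}_\Phi$ kills exactly the part of the right-hand side that would destroy it; the normalization of $C$ is preserved by the hermiticity of $\mathbb{K}[\Phi]$. Uniqueness and continuous dependence follow from a Gronwall estimate on the $L^2$ difference of two solutions, and energy conservation is a consequence of the Dirac--Frenkel variational structure from which $\mathcal{S}$ is derived, or alternatively of a direct computation exploiting the Hermitian symmetries and the presence of the projector.

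The maximal solution is constructed by iterating the local scheme as long as $\|\G[C(t)]^{-1}\|$ stays finite, and the sharp blow-up alternative comes from an a priori estimate of the form
\begin{equation*}
\|\Phi(t)\|_{H^1}^2 \leq \|\Phi^0\|_{H^1}^2 + C\int_0^t \nr \G[C(s)]^{-1}\nr^{3/2}\,\|\Phi(s)\|_{H^1}^2\,ds,
\end{equation*}
so that Gronwall prevents blow-up at $T^\star$ whenever $\nr\G[C]^{-1}\nr^{3/2}$ is integrable on $[0,T^\star)$, which in turn forces $\limsup_{t\to T^\star}\nr\G[C(t)]^{-1}\nr^{3/2}=+\infty$ if $T^\star<+\infty$. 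The exponent $3/2$ arises by interpolating the $L^\infty$ control of $V*(\phi_k\overline{\phi_l})$ against the kinetic term $-\Delta\Phi$ through the critical Sobolev exponent dual to $V\in L^p+L^\infty$, $p>3/2$. The main obstacle is precisely to carry out this interpolation carefully so that all $\G^{-1}$ factors collapse into exactly the power $3/2$: any cruder Sobolev/H\"older bookkeeping would produce higher powers of $\nr\G[C]^{-1}\nr$ and a strictly weaker continuation criterion, and it is the sharpness of this exponent that makes the global existence result of \cite{l2Bardos1,l2Bardos12} accessible through the present local theory.
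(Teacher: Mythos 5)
First, a remark on scope: Theorem \ref{H1thm} is not proved in this paper at all --- it is recalled verbatim from \cite{l2trabelsi,l2Bardos12} --- so there is no in-paper argument to compare yours against. Judged on its own merits, your outline follows the known strategy at the level of headlines (inversion of $\G[C]$ on the regular fiber $\partial\mathcal{F}_{N,K}$, Duhamel formulation, contraction in $C([0,T];H^1)^K\times C([0,T];\mathbb{C}^r)$, conservation laws from the Hermitian and projector structure, iteration up to a maximal time). The steps that carry the actual mathematical weight, however, are asserted rather than proved, and at least three of them do not go through as written.

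(i) The $H^1$ contraction requires estimating $\nabla\big(\mathbb{W}[C,\Phi]\Phi\big)$, hence terms of the form $\big(V*(\nabla\phi_k\,\overline{\phi}_l)\big)\phi_m$. Your only stated tool is that $V*(\phi_k\overline{\phi}_l)$ is bounded in $L^\infty$ via $H^1\hookrightarrow L^6$; but for $\frac32<d<3$ (precisely the range needed for the singular part of the Coulomb potential) the convolution of $V$ with $\nabla\phi_k\,\overline{\phi}_l\in L^1\cap L^{3/2}$ is \emph{not} in $L^\infty$, and one needs the finer H\"older/Young or Strichartz bookkeeping on which the cited proofs (and Section 3 of the present paper, in the $L^2$ setting) are built. (ii) The exponent $3/2$ in the continuation criterion is the sharp quantitative heart of the statement, and your justification --- ``interpolating the $L^\infty$ control against the kinetic term through the critical Sobolev exponent dual to $V$'' --- is not an argument: it neither derives the claimed Gronwall inequality for $\nr\Phi(t)\nr_{H^1}^2$ nor explains why the exponent should be $3/2$ independently of $d$. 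With only the local existence time depending on $\nr\Phi^0\nr_{H^1}$ and $\nr\G[C^0]^{-1}\nr$, the alternative your scheme actually yields is $\limsup_{t\to T^\star}\max\big(\nr\Phi(t)\nr_{H^1},\nr\G[C(t)]^{-1}\nr\big)=+\infty$, which is strictly weaker than the integral criterion $\int_0^{T^\star}\nr\G[C(t)]^{-1}\nr^{3/2}\,dt=+\infty$; the missing ingredient is exactly the a priori $H^1$ bound you postulate. (iii) Energy conservation at $H^1$ regularity cannot be dispatched as ``a consequence of the Dirac--Frenkel structure'': the energy contains $\nr\nabla\Phi\nr_{L^2}^2$ and is not manifestly differentiable along a flow that is only $C^1$ with values in $H^{-1}$; the references prove it by regularization, and the present paper explicitly flags it as a delicate point. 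Minor but worth noting: treating $-\frac12\Delta+U$ as a self-adjoint generator on the form domain $H^1$ is a legitimate alternative to putting $U\Phi$ into the nonlinearity, but you then owe dispersive or smoothing estimates for that perturbed propagator (in the spirit of \cite{l2Yajima}) if the scheme is to close for merely $L^d$ potentials.
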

This Theorem then yields a local well-posedness result to the MCTDHF system. 
However, observe that the result is possibly global since, for the time being, there is no indication
that $T^\star$ is necessarily finite \cite{l2trabelsi,l2Bardos1,l2Bardos12}. As noticed before, the special case $N=K$ corresponds to a single Slater determinant {\it ansatz}. In particular, the set $\mathcal{F}_{N,N}$ coincides with $\partial\mathcal{F}_{N,N}$ and becomes a smooth manifold actually. In other words, the matrix $\G[C] $ reduces to a globally invertible matrix since it becomes the $N \times N$ identity matrix. 
Therefore, Theorem \ref{H1thm} ensures the global existence of a unique solution to 
the TDHF system, that is, $T^\star=+\infty$ and it improves then previous results obtained in \cite{l2Chadam}. Actually, the proof of the energy conservation presented in \cite{l2Bardos12} is much more readable than the one there. 

Finally, recall that this result (and the one we shall present in the next section) is valid in the case of the MCTDH. The so-called Schr\"odinger-Poisson system (SPS), which coincides with the Hartree system in the special
case of "Bose Einstein condensation" when Coulombic interaction is used, 
can be also obtained as a limiting case of the MCTDH and our result applies obviously to this model, too. 
\section{Main result}
In \cite{l2Zagatti}, an $L^2$-Theory to a TDHF type model is established. 
However, that model is a peculiar mixed state formulation.
Usually, Hartree-Fock is characterized by a finite number of $N$ equations with occupation numbers equal to $1$
which is, of course, a completely different paradigm than a mixed state for a one particle model. That is, the eigenvalues of $\G[C(t)] $ are one for all time $t\geq 0$. 
However, the result obtained in  \cite{l2Zagatti} obviously also holds for finite $N$ (even if the author does not explicitly remark that) and can obviously be adapted to the usual TDHF setting.
In our work this TDHF result is improved. 
\vskip6pt
Independently,  Castella established an $L^2$ theory of the mixed state Schr\"odinger-Poisson system \cite{l2Castella}. 
More precisely, he studied  a system of infinitely coupled Schr\"odinger equations with self-consistent Coulomb potential.  The initial data needs only an $L^2$ bound, so the initial kinetic energy can possibly 
be infinite. 
Moreover, he obtained a blow-up (resp. decay) estimates for the solution as time goes to zero (resp. infinity). 

Our work is inspired by Castella's results and Strichartz techniques. Our result on MCTDHF applies also to the TDHF and the TDH "pure state" case. 
We prove the following 
\begin{thm} \label{L2thm} 
Let the potentials $U,{V}\in L^{d}(\mathbb{R}^3) $ with $d> \frac32$ in the sense of (\ref{UVcond1})
and let $(C^0,\Phi^0) \in \partial\mathcal{F}_{N,K}$ 
be an $L^2$ initial data with full rank (cf (\ref{FNKdef}), (\ref{regularfiber})).\\  
Then there exists a time $T^\star>0$ (possibly $T^\star=+\infty$) 
such that the MCTDHF system $\mathcal{S}$ (\ref{Sdef}) admits solutions $(C(t),\Phi(t))$ satisfying 
\begin{itemize}
\item $C \in C^1([0,T^\star), \mathbb{C})^r\quad \text{and}\quad \Phi \in C^0([0,T^\star), L^2(\mathbb{R}^3))^K. $ 
\end{itemize}
Moreover, for all $2\leq q< 6$
\begin{itemize}
\item[$\mathrm{i})$] $\Phi(t) \in L^{\frac{4q}{3(q-2)}}([0,T], L^q(\mathbb{R}^3))^K $. \\
\item[$\mathrm{ii})$] The solution $(C(t),\Phi(t))$ is unique in the class
\[ L^\infty([0,T], \mathbb{C})^r \times L^{\infty}([0,T], L^2(\mathbb{R}^3))^K\cap L^{\frac{4q}{3(q-2)}}([0,T], L^q(\mathbb{R}^3))^K, \]
for all $ T< T^\star$.\\
\item[$\mathrm{iii})$] $(C(t),\Phi(t))\in \partial\mathcal{F}_{N,K}$ for all $t\in [0,T^\star)$. 
\end{itemize}
\end{thm}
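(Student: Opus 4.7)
The plan is to recast the system $\mathcal{S}$ in Duhamel form and run a Banach fixed-point argument in a mixed Strichartz space, in the spirit of Castella's treatment of the Schr\"odinger--Poisson hierarchy. Since $(C^0,\Phi^0) \in \partial\mathcal{F}_{N,K}$, the density matrix $\G[C^0]$ is invertible; by continuity of the map $C \mapsto \G[C]$ one expects it to remain invertible on some interval $[0,T]$, and the orbital equation of $\mathcal{S}$ can there be multiplied by $\G[C]^{-1}$ to put it in the standard semilinear form
\begin{equation*}
i\,\partial_t \Phi = -\tfrac12\Delta\,\Phi + U\,\Phi + \G[C]^{-1}(I-\mathbf{P}_\Phi)\,\mathbb{W}[C,\Phi]\,\Phi,
\end{equation*}
which we then recast as an integral equation via Duhamel's formula. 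The coefficient equation $i\dot C = \mathbb{K}[\Phi]\,C$ is integrated in parallel as an ODE whose matrix $\mathbb{K}[\Phi]$ depends continuously on the Strichartz norm of $\Phi$. Since $\Phi^0 \in L^2$ only, the $H^1$ energy method of Theorem \ref{H1thm} is unavailable; we work instead in the mixed Strichartz space
\begin{equation*}
X_T := L^\infty([0,T];L^2(\mathbb{R}^3))^K \cap L^{8/3}([0,T]; L^4(\mathbb{R}^3))^K,
\end{equation*}
which contains $e^{it\Delta/2}\Phi^0$ thanks to the homogeneous Strichartz inequality. The remaining Lebesgue exponents in (i) will follow from the general Strichartz estimates applied to the Duhamel formula.

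The main nonlinear ingredient is the control of $F[C,\Phi] := U\Phi + \G[C]^{-1}(I-\mathbf{P}_\Phi)\mathbb{W}[C,\Phi]\Phi$ in a space dual to some admissible Strichartz pair. Each entry of $\mathbb{W}[C,\Phi]\Phi$ is a finite sum of triple products of the form $(V * (\phi_k \overline{\phi}_l))\,\phi_j$ weighted by the coefficients $\gamma_{jkil}$, which are uniformly bounded by $|C|^2$. Using $V \in L^p(\mathbb{R}^3)$ with $p>3/2$ (and the $L^{p_1}+L^{p_2}$ decomposition when needed), Cauchy--Schwarz places $\phi_k\overline{\phi}_l$ in $L^{4/3}([0,T];L^2(\mathbb{R}^3))$, and Young's convolution inequality then puts $V*(\phi_k\overline{\phi}_l)$ in a suitable $L^{4/3}([0,T];L^\rho(\mathbb{R}^3))$ space with $\rho \geq 6$. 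Multiplying by a third orbital placed in $L^\infty_t L^2_x$ and applying H\"older in space and time puts the whole triple product in $L^{4/3}([0,T];L^{3/2}(\mathbb{R}^3))$ (or a slightly smaller space, up to a factor $T^\delta$ with $\delta>0$ when $p > 3/2$ strictly). This space is precisely dual to the admissible Strichartz pair $(4,3)$, so the inhomogeneous Strichartz estimate returns the Duhamel integral in every admissible space, in particular in $X_T$. The factor $\G[C]^{-1}$ is harmless as long as $\G[C]$ has a uniformly bounded inverse, and $\mathbf{P}_\Phi$ is an orthogonal $L^2$-projection so that $\|(I-\mathbf{P}_\Phi)f\|_{L^2} \leq \|f\|_{L^2}$; both contribute only through the sup norms of $C$ and $\Phi$ in $L^2$. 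The external potential term $U\Phi$ is treated analogously by H\"older and an inhomogeneous Strichartz estimate.

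With these estimates the solution map associated with the integral formulation of $\mathcal{S}$ is a contraction on a small ball of $C^0([0,T]; \mathbb{C})^r \times X_T$ for $T$ sufficiently small, giving local existence and uniqueness in the stated class. Propagating the solution up to the maximal time $T^\star$ is then standard. The full-rank property (iii) follows from the continuity of $t \mapsto \G[C(t)]$ together with the openness of the set of invertible matrices: $\G[C(t)]$ is invertible at $t=0$ by assumption, and any first time it could become singular would contradict the local-in-time construction performed near that time.

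The main obstacle is the intertwining of the $r$-dimensional ODE for $C$ with the $K$ Strichartz-type estimates for $\Phi$ through the non-linear factor $\G[C]^{-1}$, which is only Lipschitz on the open cone of invertible matrices. In the difference estimate for two candidate solutions $(C,\Phi)$ and $(\tilde C, \tilde \Phi)$, one must invoke the algebraic identity $A^{-1}-B^{-1}=A^{-1}(B-A)B^{-1}$ to reduce a Lipschitz estimate on $\G[C]^{-1}-\G[\tilde C]^{-1}$ to one on $C-\tilde C$, while keeping a uniform lower bound on the smallest singular value of $\G[C(t)]$ throughout the contraction iteration. A secondary technicality, explicitly flagged in the introduction, is that a Coulomb-type $V$ genuinely requires the two-component $L^{p_1}+L^{p_2}$ decomposition rather than a single $L^p$ when estimating the convolution $V * (\phi_k \overline{\phi}_l)$.
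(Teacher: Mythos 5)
Your local-in-time construction matches the paper's strategy (Duhamel formula, Strichartz estimates, contraction in a mixed space $L^\infty_t L^2_x \cap L^p_t L^q_x$, the resolvent identity $A^{-1}-B^{-1}=A^{-1}(B-A)B^{-1}$ for $\G[C]^{-1}$), but there is a genuine gap at the sentence ``Propagating the solution up to the maximal time $T^\star$ is then standard.'' In the $L^2$ setting this is precisely the non-standard part. The contraction gives you a time $T_0$ depending on $\nr C^0\nr$ and $\nr\Phi^0\nr_{L^2}$; to iterate up to the rank-breakdown time $T^\star$ you need these quantities to be conserved along the flow, and the formal computation yielding $\nr\Phi(t)\nr_{L^2}=\nr\Phi^0\nr_{L^2}$ cannot be justified directly for a mild solution that is merely in $L^\infty_t L^2_x$. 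The paper resolves this by approximating $(C^0,\Phi^0)$ with $H^1$ data $(C^{0,n},\Phi^{0,n})$, invoking the $H^1$ well-posedness of Theorem \ref{H1thm} (where the conservation laws hold rigorously), and passing to the limit using the continuous-dependence estimate \eqref{continiousdependence}; an a priori bound in the style of Tsutsumi ($\nr\Phi\nr_{L^{p,q}_T}\leq 2\rho(p)\nr\Phi^0\nr_{L^2}$, Proposition \ref{localexistence}) then makes $T_0$ depend only on conserved quantities so the argument can be reiterated to $T^{\star-}$. The same limiting argument is what upgrades $\Phi\in C_w([0,T_0],L^2)$ to the strong continuity $\Phi\in C^0([0,T^\star),L^2)$ claimed in the theorem; your proposal never addresses how $C^0_t L^2_x$ continuity (as opposed to mere $L^\infty_t L^2_x$ membership) is obtained.

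Two secondary points. First, your fixed choice $X_T=L^\infty_t L^2_x\cap L^{8/3}_t L^4_x$ corresponds to the exponent $q=4$, i.e. essentially to $V\in L^2$; for a general $d>\frac32$ the paper must let the admissible pair vary, $q=\frac{2d}{d-1}$, so that the dual-space estimates close with a positive power $T^{\frac3q-\frac12}$ of the time. Second, the remark that $\mathbf{P}_\Phi$ is an $L^2$-orthogonal projection with $\nr(I-\mathbf{P}_\Phi)f\nr_{L^2}\leq\nr f\nr_{L^2}$ does not directly apply where you need it: the term $\mathbb{W}[C,\Phi]\Phi$ is being measured in the dual Strichartz space $L^{p',q'}_T$ with $q'<2$, not in $L^2_x$. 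The paper estimates the projected piece $\mathbf{P}_\Phi\mathbb{W}[C,\Phi]\Phi$ separately in $L^{1,2}_T$ (the dual of the admissible pair $(\infty,2)$), exploiting that the inner products $D_V(\phi_i\overline{\phi}_j,\phi_k\overline{\phi}_l)$ are scalars in time; some such device is needed to close your contraction estimate.
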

Hence, this result establishes an $L^2$ theory for the solution $\Psi_{\mathrm{MC}}(C(t),\Phi(t))$ 
(\ref{PsiMC}) of the MCTDHF equations 
as long as the density matrix $\G[C(t)]$ remains of full rank. 
Note that our theorem yields a global $L^2$ theory for the TDHF and the TDH models 
(the "pure state"  versions of the models studied in \cite{l2Zagatti,l2Castella} as mentioned above). 
\vskip6pt
The paper is structured as follows. In the next section, we collect some well-known tools, 
like Strichartz estimates and the properties associated to the semigroup $\mathbf{U}(t)$ generated 
by $i\frac{1}{2} \Delta$ on $L^2(\mathbb{R}^3)$. 
Moreover, we prove a local existence result using a standard contraction argument in an adequate space $\mathcal{X}_{T'}^{p,q}$ for a given reals $p,q$ and a nonnegative time $T'$. In section 3, we prove that this local solution satisfies an {\it a priori} estimate which will be crucial in order to prolongate the solution beyond $ T'$. Section 4 is dedicated to the proof of the main result \ref{L2thm}. 
Finally, we make some comments and application of this Theorem.
\section{A few technical Lemmata}
First of all, let us specify the notation we will use throughout this paper and recall some well known tools. The real $p'$ will be the conjugate of $p$ with $1\leq p\leq \infty $, that is, $\frac{1}{p}+\frac{1}{p'}=1$. By abuse of notation, we denote $L^p=L^p(\mathbb{R}^3,\mathbb{C}) $ but also $L^p=L^p(\mathbb{R}^3,\mathbb{R})$ when there is no confusion. The same notation will be used for $L^p(\mathbb{R}^3,\mathbb{C}) ^K$ and will be specified explicitly when necessary. The associated norms will be denoted $\nr\cdot\nr_{L^p}$. The same conventions are adopted for the Sobolev spaces $H^1$. $(\mathbf{U}(t))_{t\in \mathbb{R}}$ is the group of isometries $(e^{\frac{i}{2}t\Delta})_{t\in \mathbb{R}}$ generated by $\frac{i}{2}\Delta$ on $L^2(\mathbb{R}^3,\mathbb{C})$. 
Finally, $\kappa$ will be an auxiliary positive constant depending on $N$ and $K$. 
Also "$\mathrm{const.}$" will denote generic constants depending on quantities that will be indicated explicitly when necessary. Next, for a given $V(|x|)$, let for all $ 1\leq i,j\leq K$
\[
\mathbb{V}[\Phi]_{i,j}(t,x):= \int_{\mathbb{R}^3}\:\:\phi_i(t,y)\:V(|x-y|)\:\overline{\phi}_j(t,y)\:dy
\]
and
\[
D_V[f,\overline{g}](t) = \int_{\mathbb{R}^3\times \mathbb{R}^3} f(t,x) \:V(|x-y|)\:\overline{g}(t,y)\:dx\:dy.
\]
From this point onward, we shall omit the dependence on $t$ and $x$ when the context  is clear. Finally, for a given $T>0$ we denote 
$
L^{p,q}_T=L^p([0,T],L^q).
$
Now, 
\begin{defn}\label{admissible}
 The pair of reals $(p,q)$ is said to be {\it admissible}, we denote $(p,q)\in \mathcal{A} $, if and only if the following relation holds true.
\[
\frac{2}{3\:p}=(\frac{1}{2}-\frac{1}{q}) \quad \text{and} \quad 2\leq q < 6.
\]
\end{defn}
Then, we are able to recall 
\begin{lem}\label{Strilem} Let $ 0 < t \leq T $, then for all $ (a,b),(p,q) \in \mathcal{A},\phi \in L^2 $ and $\varphi \in L^{a',b'}_T$, there exists $\rho(a)$ and $ \rho(a,p) $ such that 
\begin{equation}\label{S2}
  \nr \mathbf{U}(t)\:\phi\nr_{L^{p,q}_T} \leq \rho(p) \:\nr\phi\nr_{L^2}\quad \text{and}\quad  \nr \int_{0}^{t} \mathbf{U}(t-s)\:\varphi(s)ds  \nr_{L^{p,q}_{T}} \leq \rho(p,a)\:\nr \varphi\nr_{L^{a',b'}_T}.
 \end{equation}
\end{lem}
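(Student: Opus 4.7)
The statement is the classical (non-endpoint) Strichartz estimate for the free Schrödinger group $\mathbf{U}(t)=e^{i\frac{t}{2}\Delta}$ on $\mathbb{R}^3$. My plan is the standard $TT^*$ machinery, which works cleanly here because the admissibility range $2\leq q<6$ (equivalently $p>2$) keeps us strictly away from the Keel--Tao endpoint, so that Hardy--Littlewood--Sobolev does all of the heavy lifting.

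First I would establish the pointwise-in-time dispersive estimate
\begin{equation*}
\nr \mathbf{U}(t)\phi\nr_{L^q}\leq C\,|t|^{-3(\frac12-\frac1q)}\,\nr \phi\nr_{L^{q'}},\qquad 2\leq q\leq \infty,
\end{equation*}
by interpolating via Riesz--Thorin between the explicit $L^1\to L^\infty$ bound coming from the Schrödinger kernel $(2\pi it)^{-3/2}e^{i|x|^2/(2t)}$ and the $L^2$ isometry property of $\mathbf{U}(t)$.

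Next, for the homogeneous estimate, I regard $T\phi:=\mathbf{U}(\cdot)\phi$ as a map $L^2\to L^{p,q}_T$ and examine $TT^*F(t)=\int_0^T\mathbf{U}(t-s)F(s)\,ds$. Minkowski's inequality in $x$ combined with the dispersive estimate gives
\begin{equation*}
\nr TT^*F(t)\nr_{L^q}\leq C\int_0^T|t-s|^{-3(\frac12-\frac1q)}\,\nr F(s)\nr_{L^{q'}}\,ds.
\end{equation*}
Admissibility yields the exponent $3(\frac12-\frac1q)=\frac{2}{p}\in[0,1)$ since $p>2$, so the one-dimensional Hardy--Littlewood--Sobolev inequality in the time variable produces $\nr TT^*F\nr_{L^{p,q}_T}\leq \rho(p)^2\,\nr F\nr_{L^{p',q'}_T}$. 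Hence $\nr T\nr^2=\nr TT^*\nr\leq \rho(p)^2$, which is the first inequality of \eqref{S2}.

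For the inhomogeneous estimate with two possibly distinct admissible pairs $(a,b)$ and $(p,q)$, I would factor the non-retarded operator $\int_0^T \mathbf{U}(t-s)\varphi(s)\,ds$ as $T_{(p,q)}\circ T_{(a,b)}^*$, where $T_{(a,b)}^*:L^{a',b'}_T\to L^2$ is the dual of the homogeneous inequality (with norm $\rho(a)$) and $T_{(p,q)}:L^2\to L^{p,q}_T$ carries norm $\rho(p)$; composing yields the non-retarded bound with constant $\rho(p,a):=\rho(p)\rho(a)$. To restrict the integral to $\int_0^t$, I would invoke the Christ--Kiselev lemma, whose monotone-ordering hypothesis $a'<p$ is guaranteed by $p,a>2$ in the strict non-endpoint regime. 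The only delicate point is the exclusion of the endpoint $q=6$ at which HLS degenerates; since this is built into the paper's admissibility condition, the genuine Keel--Tao endpoint argument is never needed, and the main residual work is the careful bookkeeping of the Christ--Kiselev step.
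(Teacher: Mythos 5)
Your proof is correct, but note that the paper does not actually prove this lemma: its ``proof'' is a one-line citation to Ginibre--Velo, Strichartz and Cazenave, so there is no argument in the paper to compare against line by line. What you have written is precisely the standard non-endpoint Strichartz argument that those references contain: the $L^1\to L^\infty$ kernel bound interpolated with the $L^2$ isometry to get the dispersive decay $|t|^{-3(\frac12-\frac1q)}$, the $TT^*$ reduction, and one-dimensional Hardy--Littlewood--Sobolev in time, which is available exactly because the paper's admissibility condition $2\leq q<6$ forces $p>2$ and keeps the exponent $\frac{2}{p}$ strictly below $1$. The only place where you diverge from the cited classical route is the retarded, mixed-pair estimate $(a,b)\neq(p,q)$: you invoke the Christ--Kiselev lemma (with $a'<2<p$, which is indeed satisfied), whereas Ginibre--Velo and Cazenave, which predate Christ--Kiselev, handle this case by proving the diagonal retarded estimate directly from HLS (the truncated kernel $\mathbf{1}_{s<t}|t-s|^{-2/p}$ still obeys HLS), obtaining the pairs involving $(\infty,2)$ by duality, and interpolating. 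Both routes are valid away from the endpoint; yours is arguably cleaner to state, the classical one avoids an extra black box. Either way the constants $\rho(p)$, $\rho(p,a)$ you produce are exactly what Lemma \ref{Strilem} asserts, so the proposal is a complete and correct substitute for the omitted proof.
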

\begin{proof} 
This is a classical Lemma and we refer the reader to \cite{l2Ginibre,l2Ginibre1,l2Strich,l2Caz}. 
\end{proof}
The first inequality appearing in \eqref{S2} describes a notable smoothing effect. In particular it tell us that for all $t\in \mathbb{R}$ and $\phi\in L^2$, we have obviously $\mathbf{U}(t)\:\phi \in L^p$. The second inequality is crucial when dealing with non-linearities in the framework of Schr\"odinger type equations. Indeed, without loss of generality we write the following generic Duhamel formula 
\[\psi(t)= \mathbf{U}(t)\:\phi -i\int_0^t \mathbf{U}(t-s) \:f(\psi(s)),\]
for a given functional $f$. Then, the first inequality of \eqref{S2} allows to control 
the $L^2$ norm of $\mathbf{U}(t)\:\phi $ in terms of the $L^2$ norm of $\phi$. 
However, it is merely impossible to control the $L^2$ norm of $\int_0^t \mathbf{U}(t-s) \:f(\psi(s))\:ds$ in terms of the one of $\psi$ for general non-linearities $f$. The second inequality of \eqref{S2} will, then, enable us to control the $L^{p,q}_T$ norm of $\int_0^t \mathbf{U}(t-s) \:f(\psi(s))\:ds$ for a given $T>0$ and a couple of reals $(p,q)\in \mathcal{A}$ in terms of the $L_T^{p',q'}$ norm of $f$ and allows to conclude. 
\vskip6pt
The Duhamel formula associated to the MCTDHF system $\mathcal{S}$ for a given initial data $(C^0,\Phi^0)$ is written as follows for all time $t$ such that $\G[C(t)] $ is invertible,
\begin{eqnarray}
\lefteqn{\begin{bmatrix}C(t) \\ \\ \Phi(t) \end{bmatrix}=\begin{bmatrix}C^0 \\ \\ \mathbf{U}(t)\:\Phi^0\end{bmatrix}\nonumber }\\ &&-i\int_0^t\begin{bmatrix}\mathbb{K}[\Phi(s)]\,C(s)\\ \\  \mathbf{U}(t-s)\:\Big[U\:\Phi(s) +\G[C(s)]^{-1}\:(I-\mathbf{P}_\Phi)\:\mathbb{W}[C(s),\Phi(s)]\:\Phi(s)\Big]
\end{bmatrix} \,ds\label{duhamel}.
\end{eqnarray} 
\begin{rem}
The potential $U$ being time-independent, we chose for simplicity to add it to the non-linear part. An alternative way to proceed consists in considering the linear PDE  $ i\frac{\partial}{\partial t}\:u(t,x) = -\frac12 \Delta\:u(t,x) + U(x)\:u(t,x)$ and find adequate real $p$ for $U\in L^p$ such that one can associate to this flow  a propagator that satisfies Strichartz-type estimates \eqref{S2}. We refer the reader to, e.g., \cite{l2Yajima}.  
\end{rem}
Next, formal functional analysis calculation leads to  
\begin{lem}\label{genericestimates}
Let $U, {V} \in L^d$, $(p,q),(p_i,q_i) \in \mathcal{A}$ for $i=1,\ldots,4$ and $T>0$. Then, 
\begin{eqnarray}
&& \nr U\:\phi_1\nr _{L^{p',q'}_T} \leq T^{\frac{3}{2}\left(\frac1q+\frac{1}{q_1}\right)-\frac12}\: \nr U\nr_{L^d}\:\nr\phi_1\nr_{L^{p_1,q_1}_T}, \quad \frac1q +\frac{1}{q_1}=1- \frac{1}{d},\label{estim1}\\
&& \nonumber \\
 && \nr\mathbb{V}[\Phi]_{1,2}\:\phi_3 \nr_{L^{p',q'}_T} \leq T^{\frac{3}{2}\left(\frac1q+\sum_{k=1}^3\frac{1}{q_k}\right)-2}\: \nr V\nr_{L^d}\:\prod_{i=1}^3\:\nr\phi_i\nr_{L^{p_i,q_i}_T},\quad \frac1q+\sum_{i=1}^3\frac{1}{q_i}=2-\frac1d, \label{estim2}\\
 && \nonumber \\
 && \left|\int_0^T\:D_V(\phi_1(t)\:\overline{\phi}_2(t),\phi_3(t)\:\overline{\phi}_4(t))\:dt\right| \leq T^{\frac{3}{2}\sum_{k=1}^4\frac{1}{q_k}-2}\: \nr V\nr_{L^d}\:\prod_{i=1}^4\:\nr\phi_i\nr_{L^{p_i,q_i}_T},\nonumber\\ &&\qquad \qquad \qquad \qquad \qquad \qquad\qquad \qquad \qquad\sum_{i=1}^4\frac{1}{q_i}=2-\frac1d.\label{estim3}
\end{eqnarray}
\end{lem}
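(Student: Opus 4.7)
The three estimates have a common structure: apply Hölder's inequality (and Young's convolution inequality in the convolution cases) pointwise in the space variable to obtain a bound on the integrand in terms of $\|V\|_{L^d}$ (or $\|U\|_{L^d}$) and the spatial $L^{q_i}$-norms of the $\phi_i$, then integrate in $t$ with another Hölder estimate. The admissibility conditions for $(p_i,q_i)\in\mathcal{A}$, namely $\tfrac{1}{p_i} = \tfrac{3}{4}-\tfrac{3}{2q_i}$, force the specific powers of $T$ stated in the lemma. A preliminary computation to keep in mind is that, under admissibility, $\tfrac{1}{p'}-\tfrac{1}{p_1} = \tfrac{3}{2}\bigl(\tfrac{1}{q}+\tfrac{1}{q_1}\bigr)-\tfrac{1}{2}$, and more generally $\tfrac{1}{p'}-\sum_{i} \tfrac{1}{p_i} = \tfrac{3}{2}\bigl(\tfrac{1}{q}+\sum_i \tfrac{1}{q_i}\bigr) - \tfrac{1+k}{2}$ where $k$ is the number of factors; this is where the powers of $T$ in \eqref{estim1}--\eqref{estim3} come from.

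For \eqref{estim1}, I would apply Hölder in space at fixed $t$ to get $\|U\phi_1(t)\|_{L^{q'}}\le\|U\|_{L^d}\|\phi_1(t)\|_{L^{q_1}}$, the exponents being forced by $\tfrac{1}{q}+\tfrac{1}{q_1}=1-\tfrac{1}{d}$. Since $U$ is time-independent, a single Hölder in $L^{p'}([0,T])$ against a constant then produces the factor $T^{\tfrac{1}{p'}-\tfrac{1}{p_1}}$, which by the bookkeeping above equals $T^{\tfrac{3}{2}(\tfrac{1}{q}+\tfrac{1}{q_1})-\tfrac{1}{2}}$.

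For \eqref{estim2}, I would write $\mathbb{V}[\Phi]_{1,2}(x) = V\ast(\phi_1\overline{\phi_2})(x)$ and use Young's inequality $\|V\ast(\phi_1\overline{\phi_2})\|_{L^a}\le\|V\|_{L^d}\|\phi_1\overline{\phi_2}\|_{L^b}$ with $1+\tfrac{1}{a}=\tfrac{1}{d}+\tfrac{1}{b}$, followed by Hölder to split $\phi_1\overline{\phi_2}$ into $L^{q_1}\times L^{q_2}$ and then another Hölder to multiply by $\phi_3$ and land in $L^{q'}_x$. Solving the linear system $\tfrac{1}{b}=\tfrac{1}{q_1}+\tfrac{1}{q_2}$ and $\tfrac{1}{a}=\tfrac{1}{q'}-\tfrac{1}{q_3}$ against Young's constraint reproduces exactly $\tfrac{1}{q}+\sum_{i=1}^3\tfrac{1}{q_i}=2-\tfrac{1}{d}$. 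Integration in $t$ by Hölder with exponents $p',p_1,p_2,p_3$ and the admissibility identity yields the power $T^{\tfrac{3}{2}(\tfrac{1}{q}+\sum_{i=1}^3\tfrac{1}{q_i})-2}$.

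For \eqref{estim3}, I would rewrite the quadri-linear form as $D_V[\phi_1\overline{\phi_2},\phi_3\overline{\phi_4}]=\int (\phi_3\overline{\phi_4})\,\bigl(V\ast(\phi_1\overline{\phi_2})\bigr)\,dx$, then combine Hölder (to pair $\phi_3\overline{\phi_4}$ against the convolution in dual exponents $a,a'$) with Young and Hölder on the two products, obtaining a pointwise-in-$t$ bound by $\|V\|_{L^d}\prod_{i=1}^4\|\phi_i\|_{L^{q_i}}$ under $\sum_{i=1}^4\tfrac{1}{q_i}=2-\tfrac{1}{d}$. Integrating in $t$ via Hölder in $L^1([0,T])$ against $\prod_i L^{p_i}_T$ and invoking the admissibility identity one last time produces $T^{\tfrac{3}{2}\sum_{i=1}^4\tfrac{1}{q_i}-2}$. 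There is no serious obstacle here: the lemma is a purely calculatory use of Hölder and Young, and the only delicate point is the bookkeeping of the exponents, which is dictated by admissibility and by the Young--Hölder chain closing up.
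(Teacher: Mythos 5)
Your proof is correct and follows exactly the route the paper intends: the paper's own "proof" is a single sentence declaring the lemma a straightforward consequence of H\"older and Young in space and time (with a pointer to Zagatti for similar estimates), and your exponent bookkeeping for each of \eqref{estim1}--\eqref{estim3} checks out. One small slip: your parenthetical general formula $\tfrac{1}{p'}-\sum_i\tfrac{1}{p_i}=\tfrac{3}{2}\bigl(\tfrac{1}{q}+\sum_i\tfrac{1}{q_i}\bigr)-\tfrac{1+k}{2}$ should have $\tfrac{3k-1}{4}$ in place of $\tfrac{1+k}{2}$ (they agree only at $k=3$), but the specific powers of $T$ you state for each of the three estimates are nevertheless the correct ones.
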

\begin{proof}
The proof is nothing but a straightforward calculation based on the well-known H\"older and Young inequalities 
in space and time and we refer to \cite{l2Zagatti} for similar estimates. 
\end{proof}
An immediate corollary of Lemma \ref{genericestimates} is the following 
\begin{cor} \label{corestimates}
Let $U,{V} \in L^d, d> \frac32$ and $(p,q=\frac{2d}{d-1})\in \mathcal{A}$. Then, for all $T>0$, we have
\begin{eqnarray}
&& \nr U\:\Phi\nr _{L^{p',q'}_T} \leq \kappa\:T^{\frac{3}{q}-\frac12}\: \nr U\nr_{L^d}\:\nr\Phi\nr_{L^{p,q}_T}, \label{potestimate}\\
&&\nonumber \\
&&\left| \int_{0}^T \:\mathbb{K}[\Phi(t)]\:dt \right| \leq\kappa\: T^{\frac{3}{q} -\frac12} \:\nr V\nr_{L^d}\:\nr\Phi\nr^2_{L^{\infty,2}_T}\:\nr\Phi\nr^2_{L^{p,q}_T}, \label{first}\\
&&\nonumber  \\
&&\nr \mathbb{W}[C,\Phi]\:\Phi\nr _{L^{p',q'}_T} \leq \kappa\: T^{\frac{3}{q} -\frac12} \:\nr V\nr_{L^d}\:\nr C\nr^2_{\mathbb{C}^r}\:\nr\Phi\nr^2_{L^{\infty,2}_T}\:\nr\Phi\nr_{L^{p,q}_T},\label{second}\\
&&\nonumber \\
&& \nr \mathbf{P}_\Phi\:\mathbb{W}[C,\Phi]\:\Phi\nr _{L^{1,2}_T} \leq \kappa\: T^{\frac{3}{q} -\frac12} \:\nr V\nr_{L^d}\:\:\nr\Phi\nr^3_{L^{\infty,2}_T}\:\nr\Phi\nr^2_{L^{p,q}_T}.\label{third}
\end{eqnarray}
\end{cor}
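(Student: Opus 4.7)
My plan is to observe that all four bounds follow by specialization of the generic estimates (\ref{estim1})--(\ref{estim3}) of Lemma \ref{genericestimates}, with the exponent pairs $(p_i, q_i)$ chosen from $\{(p, q),(\infty, 2)\}$. With $q = 2d/(d-1)$, so that $(p, q) \in \mathcal{A}$, the admissibility condition $\sum_i 1/q_i = k - 1/d$ of Lemma \ref{genericestimates} (with $k = 1$ in (\ref{estim1}) and $k = 2$ in (\ref{estim2})--(\ref{estim3})) reduces in each case to $2/q + (\mathrm{const}) = k - 1/d$, which is satisfied by the counts indicated below, and it produces the common time prefactor $T^{3/q - 1/2} = T^{(2d-3)/(2d)}$, strictly positive since $d > 3/2$.

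For (\ref{potestimate}) I apply (\ref{estim1}) entrywise to each $\phi_i$ with $(p_1, q_1) = (p, q)$. For (\ref{first}) I expand $\mathbb{K}[\Phi]_{\sigma, \tau}(t)$ as a combinatorially finite sum (absorbed into $\kappa = \kappa(N, K)$) of $D_V[\phi_i \bar\phi_k,\, \phi_j \bar\phi_l](t)$ terms, to which (\ref{estim3}) is applied with two factors in $L^{p, q}_T$ (say $\phi_i, \phi_j$) and two in $L^{\infty, 2}_T$ (say $\phi_k, \phi_l$). For (\ref{second}) I expand $(\mathbb{W}[C, \Phi]\Phi)_i = 2\sum_{j,k,l} \gamma_{jkil}\,\mathbb{V}[\Phi]_{k,l}\,\phi_j$ and apply (\ref{estim2}) to each term with $(q_1, q_2, q_3) = (2, 2, q)$. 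The prefactors $|\gamma_{jkil}|$ are pointwise bounded by $\sum_{\sigma, \tau}|C_\sigma||C_\tau| \leq r\,\|C\|^2_{\mathbb{C}^r}$ via Cauchy-Schwarz.

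The bound (\ref{third}) is the only one requiring slight extra care, and this is the step on which I would focus. Writing the rank-one expansion $\mathbf{P}_\Phi\psi = \sum_m \langle \psi, \phi_m\rangle_{L^2_x}\,\phi_m$ and using the triangle inequality rather than Parseval yields
\[
\|\mathbf{P}_\Phi\psi\|_{L^2_x} \leq \sum_{m=1}^K |\langle \psi, \phi_m\rangle_{L^2_x}|\,\|\phi_m\|_{L^2_x}.
\]
Applied to $\psi = (\mathbb{W}[C, \Phi]\Phi)_i$, the inner products unfold into finite linear combinations of $\gamma_{jkil}\,D_V[\phi_j\bar\phi_m,\, \phi_k\bar\phi_l](t)$; integrating over $t \in [0, T]$ and invoking (\ref{estim3}) with $(q_1, q_2, q_3, q_4) = (q, 2, q, 2)$ bounds each contribution by a constant times $T^{3/q - 1/2}\,\|V\|_{L^d}\,\|\Phi\|^2_{L^{p,q}_T}\,\|\phi_m\|_{L^{\infty, 2}_T}\,\|\Phi\|_{L^{\infty, 2}_T}$. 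Summing in $m$ and absorbing the additional factor $\|\phi_m\|_{L^2_x} \leq \|\phi_m\|_{L^{\infty, 2}_T}$ gives $\sum_m \|\phi_m\|^2_{L^{\infty, 2}_T} \leq \|\Phi\|^2_{L^{\infty, 2}_T}$, producing precisely the third $\|\Phi\|_{L^{\infty, 2}_T}$ factor announced in (\ref{third}).

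No genuine obstacle arises: the corollary is pure book-keeping built on Lemma \ref{genericestimates}, and the only delicate point is the triangle-inequality form of the projection estimate in (\ref{third}) (rather than the Parseval identity), which is necessary in order to obtain the correct power of $\|\Phi\|_{L^{\infty, 2}_T}$.
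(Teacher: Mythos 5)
Your proposal is correct and follows essentially the same route as the paper: both reduce each bound to Lemma \ref{genericestimates} with the exponent pairs drawn from $\{(p,q),(\infty,2)\}$, bound the combinatorial coefficients $\gamma$ by $\nr C\nr^2_{\mathbb{C}^r}$ up to a constant $\kappa(N,K)$, and handle \eqref{third} by noting that the projection produces scalar $D_V$ factors in time multiplying an orbital, which is exactly why the paper also measures that term in $L^{1,2}_T$ to extract the extra $\nr\Phi\nr_{L^{\infty,2}_T}$. Your write-up is in fact slightly more explicit than the paper's (notably the triangle-inequality form of the projection bound and the exponent bookkeeping), but there is no substantive difference in method.
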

\begin{proof}
The proof relies on the estimates  \eqref{estim2} and \eqref{estim3} of Lemma \ref{genericestimates}. The assertion \eqref{potestimate} is easy, in fact one sets $(p_1,q_1)= (p,q) \in \mathcal{A}$ and gets the result. Next, given a potential $V\in L^d$, the matrix $\mathbb{K}$ involves elements of type $D_V(\phi_i\overline{\phi}_j,\phi_k\overline{\phi}_l) $ for which one use the estimate \eqref{estim3} by setting, for instance, $(p_3,q_3)= (p_4,q_4)=(\infty,2) \in \mathcal{A}$ and $(p_1,q_1) =(p_2,q_2) =(p,q) \in \mathcal{A}$. Also, the vector $\mathbb{W}[C,\Phi]\:\Phi$ involves terms of type $\mathbb{V}[\Phi]_{i,j}\phi_k $ that can be handled using the estimate \eqref{estim2}. In fact, we set for instance $(p_2,q_2) =(p_3,q_3) = (\infty,2) \in \mathcal{A}$ and $(p_3,q_3)=(p,q) \in \mathcal{A}$. Finally, the vector $\mathbf{P}_\Phi\:\mathbb{W}[C,\Phi]\:\Phi$ involves terms of type $D_V(\phi_i\overline{\phi}_j,\phi_k\overline{\phi}_l)\:\phi_l$. Observe that $D_V(\phi_i\overline{\phi}_j,\phi_p\overline{\phi}_l)$ is a time-dependent scalar, thus, estimating the left hand side in an $L^{p',q'}_T$ leads automatically to an $L^{q'}_x$ norm on $\phi_l$ in the right hand side. For convenience, we estimate this term in $L^{1,2}_T$ in order to get an $\nr\phi_l\nr_{L^{\infty,2}_T}$ and use the same choice as in \eqref{first}.
\end{proof}
\begin{rem}
From the one side, observe that for $d>\frac{3}{2}$, we have obviously  $ 2\leq q=\frac{2d}{d-1} < 6 $. Moreover, the estimates of Corollary \ref{corestimates} involve $T^\alpha $ with power $\alpha >0 $ so that $T^\alpha \rightarrow 0 $ as $T\rightarrow 0$. Indeed, since $2\leq q < 6$, we have $0< \frac3q-\frac12 \leq 1$.  This observation will be crucial in the sequel. From the opposite side, assume $V$ bounded, that is, $d=\infty$, then, the estimates of the Corollary \ref{corestimates} are valid with $q=2$ and $\alpha$, the power of $T$, equal to $1$.
\end{rem}
Next, given $T>0$ and $(p,q) \in \mathcal{A}$, we define the spaces
\[ \mathcal{Z}^{p,q}_T = L^{\infty,2}_T \cap L^{p,q}_T,\quad \mathcal{X}^{p,q}_T= \mathbb{C}^r \times \mathcal{Z}^{p,q}_T,\]
endowed with the norms 
\[\nr\phi\nr_{\mathcal{Z}^{p,q}_T} = \nr\phi\nr_{L^{2,\infty}_T} +\nr\phi\nr_{L^{p,q}_T},\quad \nr C,\Phi\nr_{\mathcal{X}^{p,q}_T}=  \nr C \nr_{\mathbb{C}^r} + \nr\Phi\nr_{\mathcal{Z}^{p,q}_T}.\]
A topology on $\mathcal{Z}^{p,q}_T$ and $\mathcal{X}^{p,q}_T$ being defined, we are able to prove the following 
\begin{lem}\label{Lipbounds}Let $U,{V} \in L^d, d> \frac32, (p,q)\in \mathcal{A}$ such that $q=\frac{2d}{d-1}$. Then, for all $T>0$, we have
\begin{eqnarray}
&&\nr\mathbb{K}[\Phi(t)]\:C(t) -\mathbb{K}[\Phi'(t)]\:C'(t)  \nr_{L^1([0,T])} \leq \mathrm{const}_1\:T^{\frac3q-\frac12}\: \nr(C,\Phi)-(C',\Phi')\nr_{\mathcal{X}^{p,q}_T}, \label{Lip0}\\
&& \nonumber\\
&& \nr U\:(\Phi-\Phi')\nr _{L^{p',q'}_T} \leq \kappa\:T^{\frac{3}{q}-\frac12}\: \nr U\nr_{L^d}\:\nr\Phi-\Phi'\nr_{\mathcal{Z}^{p,q}_T}, \label{Lip01}\\
&\nonumber \\
&& \nr \mathbb{W}[C,\Phi]\:\Phi - \mathbb{W}[C',\Phi']\:\Phi' \nr _{L^{p',q'}_T} \leq \mathrm{const}_2\:T^{\frac3q-\frac12}\: \nr(C,\Phi)-(C',\Phi')\nr_{\mathcal{X}^{p,q}_T},\label{Lip1}\\
&& \nonumber \\
 &&\nr \mathbf{P}_\Phi\:\mathbb{W}[C,\Phi]\:\Phi-\mathbf{P}_{\Phi'}\:\mathbb{W}[C',\Phi']\:\Phi'\nr _{L^{1,2}_T}\leq \mathrm{const}_3 \:T^{\frac3q-\frac12}\: \nr(C,\Phi)-(C',\Phi')\nr_{\mathcal{X}^{p,q}_T},    \label{Lip2}
\end{eqnarray}
with $\mathrm{const}_1,\mathrm{const}_2$ and $\mathrm{const}_3$ depending on \
\begin{equation*}
N,K, \nr U\nr_{L^d},\nr{V}\nr_{L^d},\nr\Phi\nr_{L^{\infty,2}_T},\nr\Phi'\nr_{L^{\infty,2}_T},\nr C\nr_{\mathbb{C}^r},\nr C'\nr_{\mathbb{C}^r},\nr\Phi\nr_{L^{p,q}_T} \:\:\text{and}\:\: \nr\Phi'\nr_{L^{p,q}_T}.
\end{equation*}
\end{lem}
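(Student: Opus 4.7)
The plan is to exploit the multilinear structure of each nonlinearity in $(C,\Phi)$ and to write every difference as a telescoping sum whose summands are monomials already controlled by Corollary \ref{corestimates}, and hence ultimately by the generic trilinear and quadrilinear bounds \eqref{estim2}--\eqref{estim3} of Lemma \ref{genericestimates}. In every such monomial exactly one factor will be a ``difference" (either $C-C'$, or $\phi_\alpha-\phi_\alpha'$), while the remaining factors are orbitals or coefficients whose $L^{\infty,2}_T$, $L^{p,q}_T$, or $\mathbb{C}^r$ norms get absorbed into the constants $\mathrm{const}_i$. The estimate \eqref{Lip01} is nothing but \eqref{potestimate} applied to $\Phi-\Phi'$, so there is nothing to do.

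For \eqref{Lip0}, I would write
\[
\mathbb{K}[\Phi]\,C-\mathbb{K}[\Phi']\,C' \;=\; \mathbb{K}[\Phi]\bigl(C-C'\bigr)\;+\;\bigl(\mathbb{K}[\Phi]-\mathbb{K}[\Phi']\bigr)\,C'.
\]
The first term is handled by \eqref{first}, which bounds $\int_0^T|\mathbb{K}[\Phi]|\,dt$ and multiplies the factor $|C-C'|\leq\nr(C,\Phi)-(C',\Phi')\nr_{\mathcal{X}^{p,q}_T}$. For the second term, each entry of $\mathbb{K}[\Phi]-\mathbb{K}[\Phi']$ is a sum of expressions $D_V(\phi_i\overline{\phi}_k,\phi_j\overline{\phi}_l)-D_V(\phi_i'\overline{\phi}_k',\phi_j'\overline{\phi}_l')$, which telescopes into four summands, each carrying exactly one difference $(\phi_\alpha-\phi_\alpha')$ and three unprimed or primed orbitals. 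Applying \eqref{estim3} with the same exponent choice as in the proof of \eqref{first} (the difference factor in the $L^{p,q}_T$ slot, two remaining orbitals in $L^{\infty,2}_T$, one in $L^{p,q}_T$) yields each summand bounded by $T^{3/q-1/2}$ times a product of $L^{\infty,2}_T$ and $L^{p,q}_T$ norms of the orbitals, which is the desired Lipschitz bound.

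For \eqref{Lip1} I would split
\[
\mathbb{W}[C,\Phi]\,\Phi-\mathbb{W}[C',\Phi']\,\Phi' \;=\; \mathbb{W}[C,\Phi]\bigl(\Phi-\Phi'\bigr)\;+\;\bigl(\mathbb{W}[C,\Phi]-\mathbb{W}[C',\Phi']\bigr)\Phi',
\]
and further decompose $\mathbb{W}[C,\Phi]-\mathbb{W}[C',\Phi']$ into a quadratic-in-$C$ part (using $\gamma_{ijkl}[C]-\gamma_{ijkl}[C']=\overline{C}(C-C')+\overline{(C-C')}C'$ entrywise) and a quadratic-in-$\Phi$ part (using a two-term telescope in the orbitals under the $\mathbb{V}[\Phi]$-integral). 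Every resulting monomial is of the form $\mathbb{V}[\cdots]\phi_\ast$, so \eqref{estim2} with the same admissible exponents that gave \eqref{second} produces the $T^{3/q-1/2}$ factor, the difference factor always being placed in the $(p,q)$ slot. The estimate \eqref{Lip2} is analogous but with one extra orbital from the projection: one writes
\[
\mathbf{P}_\Phi\mathbb{W}[C,\Phi]\Phi-\mathbf{P}_{\Phi'}\mathbb{W}[C',\Phi']\Phi' \;=\; \bigl(\mathbf{P}_\Phi-\mathbf{P}_{\Phi'}\bigr)\mathbb{W}[C,\Phi]\Phi\;+\;\mathbf{P}_{\Phi'}\bigl(\mathbb{W}[C,\Phi]\Phi-\mathbb{W}[C',\Phi']\Phi'\bigr),
\]
expands the first projector difference as a telescope in its two orbitals, and re-uses the expansion of $\mathbb{W}[C,\Phi]\Phi-\mathbb{W}[C',\Phi']\Phi'$ just obtained. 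Each summand is a scalar $D_V(\phi_\cdot\overline{\phi}_\cdot,\phi_\cdot\overline{\phi}_\cdot)$ times an orbital, estimated in $L^{1,2}_T$ by \eqref{estim3} exactly as in the proof of \eqref{third}.

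The main obstacle is not conceptual but purely combinatorial bookkeeping: $\mathbb{K}$ is quartic in $\Phi$ and $\mathbf{P}_\Phi\mathbb{W}[C,\Phi]\Phi$ is quartic in $\Phi$ and quadratic in $C$, so the telescoping expansion produces a bounded but tedious number of monomials. The non-trivial check is that, for each monomial, one can select an admissible pair $(p_i,q_i)\in\{(p,q),(\infty,2)\}$ for every orbital so that the ``difference" factor sits in the $L^{p,q}_T$ slot, the other factors sit in either $L^{p,q}_T$ or $L^{\infty,2}_T$, and the H\"older--Young sum rules $\tfrac{1}{q}+\sum_i\tfrac{1}{q_i}=2-\tfrac{1}{d}$ (or $\sum_{i=1}^4\tfrac{1}{q_i}=2-\tfrac{1}{d}$) of Lemma \ref{genericestimates} are satisfied; this is precisely the verification carried out inside Corollary \ref{corestimates}, and it yields the same common power $T^{3/q-1/2}$ in each estimate.
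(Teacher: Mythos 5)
Your proposal is correct and follows essentially the same route as the paper: the identical telescoping decompositions for \eqref{Lip0} and \eqref{Lip1}, a trivially reordered splitting of the projector term for \eqref{Lip2}, and each resulting monomial estimated through Lemma \ref{genericestimates}/Corollary \ref{corestimates} with the admissible pairs $(p,q)$ and $(\infty,2)$. The only cosmetic caveat is that the difference factor occasionally lands in an $L^{\infty,2}_T$ slot rather than the $L^{p,q}_T$ slot, which is harmless since both are dominated by the $\mathcal{X}^{p,q}_T$ norm.
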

\begin{proof}
For a given $V$, observe that the difference $\mathbb{K}[\Phi]-\mathbb{K}[\Phi']$ involves terms of type $D_V(\phi_i\:\overline{\phi}_j,\phi_k\:\overline{\phi}_l) - D_V(\phi'_i\:\overline{\phi'}_j,\phi'_k\:\overline{\phi'}_l)$ that we manage as follows
\begin{eqnarray*}
\left|D_V(\phi_i\:\overline{\phi}_j,\phi_k\:\overline{\phi}_l)-D_V(\phi'_i\:\overline{\phi'}_j,\phi'_k\:\overline{\phi'}_l)\right| &\leq & \left|D_V([\phi_i-\phi'_i]\:\overline{\phi}_j,\phi_k\:\overline{\phi}_l)\right| +\left|D_V(\phi'_i\:[\overline{\phi}_j-\overline{\phi'}_j],\phi_k\:\overline{\phi}_l)\right|\nonumber\\
&+& \left|D_V(\phi'_i\:\overline{\phi'}_j,[\phi_k-\phi'_k]\:\overline{\phi}_l)\right| +\left|D_V(\phi'_i\:\overline{\phi'}_j,\phi'_k\:[\overline{\phi}_l-\overline{\phi'}_l])\right|.
\end{eqnarray*}
Thus, thanks to \eqref{first}, we get
\begin{eqnarray*}
&&\nr\mathbb{K}[\Phi]\:C -\mathbb{K}[\Phi']\:C'  \nr_{L^1([0,T])}  \leq  \nr\mathbb{K}[\Phi]\nr_{L^1([0,T])}\:\nr C -\:C'\nr_{\mathbb{C}^r} + \nr\mathbb{K}[\Phi]-\mathbb{K}[\Phi']\nr_{L^1([0,T])} \nr C'\nr_{\mathbb{C}^r}, \\
&& \qquad \qquad\leq \:\kappa \: T^{\frac3q-\frac12} \:\nr V\nr_{L^d} \:\nr\Phi\nr^2_{L^{\infty,2}_T}\:\nr\Phi\nr^2_{L^{p,q}_T} \: \nr C -\:C'\nr_{\mathbb{C}^r}  \\ 
&&   \qquad \qquad\:+\kappa\:\: T^{\frac3q-\frac12} \:\nr V\nr_{L^d}\:\nr C'\nr_{\mathbb{C}^r}\:\left[ \nr\Phi\nr_{L^{\infty,2}_T}\nr\Phi\nr^2_{L^{p,q}_T}+\nr\Phi'\nr_{L^{\infty,2}_T}\nr\Phi'\nr^2_{L^{p,q}_T}\right] \nr \Phi-\Phi'\nr_{L^{\infty,2}_T} \\
&&\qquad \qquad \:+  \kappa\:\: T^{\frac3q-\frac12} \:\nr V\nr_{L^d}\nr C'\nr_{\mathbb{C}^r}\:\left[ \nr\Phi\nr^2_{L^{\infty,2}_T}\nr\Phi'\nr_{L^{p,q}_T}+\nr\Phi'\nr^2_{L^{\infty,2}_T}\nr\Phi\nr_{L^{p,q}_T} \right]\nr\Phi-\Phi'\nr^2_{L^{p,q}_T}.
\end{eqnarray*}
The proof of the estimate \eqref{Lip0} follows then by setting, for instance 
 \begin{eqnarray*}
\mathrm{const}_1 &=& \kappa\:\nr V\nr_{L^d}\nr\Phi\nr^2_{L^{\infty,2}_T}\:\nr\Phi\nr^2_{L^{p,q}_T}+\kappa\:\: \:\nr V\nr_{L^d}\:\nr C'\nr_{\mathbb{C}^r}\:\left[ \nr\Phi\nr_{L^{\infty,2}_T}\nr\Phi\nr^2_{L^{p,q}_T}+\nr\Phi'\nr_{L^{\infty,2}_T}\nr\Phi'\nr^2_{L^{p,q}_T}\right] \\
&+&  \kappa\:\nr V\nr_{L^d}\nr C'\nr_{\mathbb{C}^r}\:\left[ \nr\Phi\nr^2_{L^{\infty,2}_T}\nr\Phi'\nr_{L^{p,q}_T}+\nr\Phi'\nr^2_{L^{\infty,2}_T}\nr\Phi\nr_{L^{p,q}_T} \right].
\end{eqnarray*}
Proving the inequality \eqref{Lip01} is obviously a straightforward application of \eqref{potestimate}. Now, one has   
\begin{eqnarray*}
\nr \mathbb{W}[C,\Phi]\:\Phi - \mathbb{W}[C',\Phi']\:\Phi' \nr _{L^{p',q'}_T} &\leq& \nr \mathbb{W}[C,\Phi]\:(\Phi - \Phi') \nr _{L^{p',q'}_T} \\&+&\nr \mathbb{W}[C-C',\Phi]\:\Phi' \nr _{L^{p',q'}_T} \\&+&\nr \mathbb{W}[C',\Phi-\Phi']\:\Phi' \nr _{L^{p',q'}_T}. 
\end{eqnarray*}
Observe that $ \mathbb{W}[C,\Phi] $ is quadratic in $C$ and $\Phi$. Then, by \eqref{second}, we obtain
\begin{eqnarray*}
\lefteqn{\nr \mathbb{W}[C,\Phi]\:\Phi - \mathbb{W}[C',\Phi']\:\Phi' \nr _{L^{p',q'}_T} \leq \kappa \: T^{\frac3q-\frac12}\:\nr V\nr _{L^d} \nr C\nr^2_{\mathbb{C}^r}\nr\Phi\nr^2_{L^{\infty,2}_T}\nr\Phi-\Phi'\nr_{L^{p,q}_T}   }\\&+& \kappa \: T^{\frac3q-\frac12}\:\nr V\nr _{L^d} \left[ \nr C\nr_{\mathbb{C}^r} + \nr C'\nr_{\mathbb{C}^r} \right]\nr\Phi\nr^2_{L^{\infty,2}_T}\nr\Phi'\nr_{L^{p,q}_T}\nr C-C'\nr_{\mathbb{C}^r} \\
&+& \kappa \: T^{\frac3q-\frac12}\:\nr V\nr _{L^d}  \nr C'\nr^2_{\mathbb{C}^r} \left[\nr \Phi\nr_{L^{\infty,2}_T} + \nr \Phi'\nr_{L^{\infty,2}_T}\right]\nr \Phi'\nr_{L^{p,q}_T}\nr \Phi-\Phi\nr_{L^{\infty,2}_T}.
\end{eqnarray*}
Next, setting 
\begin{eqnarray*}
\mathrm{const}_2 &=& \kappa \:\nr V\nr _{L^d} \left[ \nr C\nr^2_{\mathbb{C}^r}\nr\Phi\nr^2_{L^{\infty,2}_T}+ \nr C'\nr^2_{\mathbb{C}^r} \left[\nr \Phi\nr_{L^{\infty,2}_T} + \nr \Phi'\nr_{L^{\infty,2}_T}\right]\nr \Phi'\nr_{L^{p,q}_T} \right] \\ &+& \kappa \:\nr V\nr _{L^d} \left[ \nr C\nr_{\mathbb{C}^r} + \nr C'\nr_{\mathbb{C}^r} \right]\nr\Phi\nr^2_{L^{\infty,2}_T}\nr\Phi'\nr_{L^{p,q}_T}.
\end{eqnarray*}
for instance, finish the proof of \eqref{Lip1}.
It remains to estimate the projection part in $L^{1,2}_T$. For that purpose, we estimate first $ [\mathbf{P}_\Phi-\mathbf{P}_{\Phi'}]\: \xi $ in $L^{1,2}_T$ for a given function $\xi(t,x)\in L^{p',q'}_T$ and all $(p,q) \in \mathcal{A}$. This can be achieved thanks to H\"older inequality in space and time as follows 
\begin{eqnarray}
\nr [\mathbf{P}_\Phi-\mathbf{P}_{\Phi'}]\: \xi\nr _{L^{1,2}_T} &\leq & \sum_{k=1}^K\left[\:\nr \langle \xi,\phi_k \rangle\nr_{L^1([0,T])}\: \nr \phi_k -\phi'_k\nr_{L^{\infty,2}_T} +  \nr \langle \xi,\phi_k-\phi'_k \rangle\nr_{L^1([0,T])}\: \nr \phi'_k\nr_{L^{\infty,2}_T}\right],\nonumber\\
&\leq & \kappa \:\nr \xi\nr_{L^{p',q'}_T} \nr \Phi \nr_{L^{p,q}_T}\:\nr \Phi-\Phi'\nr_{L^{\infty,2}_T} +\kappa \:\nr \xi\nr_{L^{p',q'}_T} \nr \Phi' \nr_{L^{\infty,2}_T} \:\nr \Phi-\Phi'\nr_{L^{p,q}_T}.\label{projestimate}
\end{eqnarray}
Now, we have 
\begin{eqnarray*}
\nr \mathbf{P}_\Phi\:\mathbb{W}[C,\Phi]\:\Phi-\mathbf{P}_{\Phi'}\:\mathbb{W}[C',\Phi']\:\Phi'\nr _{L^{1,2}_T} &\leq &\nr \mathbf{P}_\Phi\:[\mathbb{W}[C,\Phi]\:\Phi-\mathbb{W}[C',\Phi']\:\Phi'\nr _{L^{1,2}_T} \\
&+& \nr [\mathbf{P}_\Phi-\mathbf{P}_{\Phi'}]\:\mathbb{W}[C',\Phi']\:\Phi'\nr _{L^{1,2}_T},\\
& \leq &\kappa \:\nr \mathbb{W}[C,\Phi]\:\Phi-\mathbb{W}[C',\Phi']\:\Phi'\nr_{L^{p',q'}_T} \nr \Phi\nr_{L^{p,q}_T}\nr \Phi \nr_{L^{\infty,2}_T} \\ 
&+& \kappa \:\nr \mathbb{W}[C',\Phi']\:\Phi'\nr_{L^{p',q'}_T}[\nr \Phi \nr_{L^{p,q}_T} + \nr \Phi' \nr_{L^{\infty,2}_T}]\nr \Phi-\Phi'\nr_{\mathcal{Z}^{p,q}_T}.
\end{eqnarray*}
The estimate above is an application of \eqref{projestimate}, first with $\Phi'\equiv 0,\:\xi \equiv \mathbb{W}[C,\Phi]\:\Phi-\mathbb{W}[C',\Phi']\:\Phi'$ and second with $\xi \equiv \mathbb{W}[C',\Phi']\:\Phi'$. Finally, Let $q =\frac{2d}{d-1}$ and recall that $V\in L^d$ with $d>\frac32$. Then  following \eqref{Lip1}, we get the desired estimate by setting for instance 
\begin{eqnarray*}
\mathrm{const}_3&=& \mathrm{const}_2\:\nr \Phi \nr_{L^{p,q}_T}\nr \Phi \nr_{L^{\infty,2}_T}+\kappa \nr V\nr_{L^d} \nr C'\nr^2_{\mathbb{C}^r} \nr \Phi' \nr_{L^{p,q}_T}\nr \Phi' \nr^2_{L^{\infty,2}_T}[\nr \Phi \nr_{L^{p,q}_T}+\nr \Phi' \nr_{L^{\infty,2}_T}],
\end{eqnarray*}
which achieves the proof.
\end{proof}
Now, let $R,T>0$ be arbitrary reals to be fixed later on. Moreover let $(C^0,\Phi^0) \in\mathcal{F}_{N,K}$ and introduce the closed ball 
\begin{equation}\label{Ball}
\tilde{\mathcal{X}}^{p,q}_T(R)= \left\{(C,\Phi) \in \mathcal{X}_T^{p,q}\::\quad \nr C,\Phi\nr_{\mathcal{X}_T^{p,q}} \leq R\right\}.
\end{equation}
This defines a complete metric space equipped with the distance induced by the norm of $\mathcal{X}_T^{p,q}$. Finally, introduce the following mapping  
\begin{eqnarray}
\pi_{C^0,\Phi^0}:\begin{bmatrix}C(\cdot) \\ \\ \Phi(\cdot) \end{bmatrix} \mapsto\begin{bmatrix}C^0 \\ \\ \mathbf{U}(\cdot)\:\Phi^0\end{bmatrix}-i\int_0^\cdot\begin{bmatrix}\mathbb{K}[\Phi(s)]\,C(s)\\ \\  \mathbf{U}(\cdot-s)\:\Big[U\:\Phi(s) +\mathbb{L}[C(s),\Phi(s)]\Big]
\end{bmatrix} \,ds\label{duhamel1},
\end{eqnarray}
with 
\[\mathbb{L}[C,\Phi] := \G[C]^{-1}\:(I-\mathbf{P}_\Phi)\:\mathbb{W}[C,\Phi]\:\Phi.\]
This formulation is then well defined as long as the matrix $\G[C(t)] $ is invertible. From now on, we will consider initial data $(C^0,\Phi^0) \in \partial\mathcal{F}_{N,K}$, that is, the associated first order density operator is of full rank. Thus, the quadratic dependence of $\G[C]$ on the coefficients $C_\sigma$ and the continuity of the MCTDHF's flow guarantees the propagation of this property up to a certain time $T^\star>0$, at least infinitesimal but possibly infinite. That is, $\G[C(t)]$ is of rank $K$ for all $t\in [0,T^\star)$, hence invertible and we refer the reader to \cite{l2Bardos12} for more details on this point. 
\vskip6pt
 Now, we claim the following
\begin{lem}\label{ContractionLemma}
Let $U,{V} \in L^d, d> \frac32, (p,q=\frac{2d}{d-1})\in \mathcal{A} $ and $(C^0,\Phi^0) \in \partial\mathcal{F}_{N,K}$ with $\nr (C^0,\Phi^0)\nr_{\mathbb{C}^r\times L^2} \leq \beta$. Then, there exist  a radius $R>0$ and a time $T$ with $0<T<T^\star$ such that the mapping $\pi $ is a strict contraction on $\tilde{\mathcal{X}}^{p,q}_T(R)$. Moreover, given $(C'^0,\Phi'^0) \in \partial\mathcal{F}_{N,K}$ with $\nr C'^0,\Phi'^0\nr_{\mathbb{C}^r\times L^2} \leq \beta$ ,then
\begin{equation}\label{continiousdependence}
\nr (C,\Phi)-(C',\Phi') \nr_{\mathcal{X}_T^{p,q}} \leq \mathrm{const}\: \nr (C^0,\Phi^0)-(C'^0,\Phi'^0) \nr_{\mathbb{C}^r\times L^2},
\end{equation}
where $(C,\Phi)$ and $(C',\Phi')$ denote the fixed points of $\pi$ associated with $(C^0,\Phi^0)$ and $(C'^0,\Phi'^0)$ respectively.
\end{lem}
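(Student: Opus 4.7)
The strategy is a standard Banach fixed-point argument applied to $\pi_{C^0,\Phi^0}$ on $\tilde{\mathcal{X}}^{p,q}_T(R)$, with $R$ chosen in terms of $\beta$ and $T$ then chosen small enough to exploit the smallness factor $T^{3/q-1/2}$ that appears explicitly throughout Corollary \ref{corestimates} and Lemma \ref{Lipbounds}.

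First I would establish the self-map property. By the Strichartz estimates of Lemma \ref{Strilem}, the free evolution $\mathbf{U}(\cdot)\Phi^0$ has $\mathcal{Z}^{p,q}_T$-norm bounded by $(1+\rho(p))\|\Phi^0\|_{L^2}$, so the ``free'' component $(C^0,\mathbf{U}(\cdot)\Phi^0)$ of $\pi(C,\Phi)$ has $\mathcal{X}^{p,q}_T$-norm at most $(1+\rho(p))\beta$; I would pick, say, $R=4(1+\rho(p))\beta$. For the Duhamel contributions, I would bound the ODE term via \eqref{first}, while the PDE term splits as $U\Phi$, $\G[C]^{-1}\mathbb{W}[C,\Phi]\Phi$ (estimated in $L^{p',q'}_T$), and $\G[C]^{-1}\mathbf{P}_\Phi\mathbb{W}[C,\Phi]\Phi$ (estimated in $L^{1,2}_T$); the second inequality of \eqref{S2} converts these into bounds on $\mathcal{Z}^{p,q}_T$. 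All four nonlinear contributions carry a factor $T^{3/q-1/2}$ times a polynomial in $R$, $\|U\|_{L^d}$, $\|V\|_{L^d}$ and $\|\G[C(\cdot)]^{-1}\|_{L^\infty_t}$, so choosing $T$ small makes their total contribution $\leq R/2$.

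The delicate point is the $\G[C]^{-1}$ factor, which is not directly covered by Corollary \ref{corestimates}. Since $C\mapsto\G[C]$ is polynomial (quadratic) and $\G[C^0]$ is invertible by the assumption $(C^0,\Phi^0)\in\partial\mathcal{F}_{N,K}$, there exist $\eta,M>0$ such that $\|C-C^0\|_{\mathbb{C}^r}<\eta$ implies $\G[C]$ is invertible with $\|\G[C]^{-1}\|\leq M$. The image $\tilde{C}=\pi(C,\Phi)|_{C\text{-slot}}$ satisfies $\tilde{C}(t)-C^0=-i\int_0^t\mathbb{K}[\Phi(s)]C(s)\,ds$, whose sup-in-time norm is $O(T^{3/q-1/2})$ by \eqref{first}. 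Hence, for $T$ small, the closed subset
\[
\mathcal{B}=\bigl\{(C,\Phi)\in\tilde{\mathcal{X}}^{p,q}_T(R)\::\:\|C(\cdot)-C^0\|_{L^\infty([0,T];\mathbb{C}^r)}\leq\eta\bigr\}
\]
is mapped into itself by $\pi$, and $\|\G[C(\cdot)]^{-1}\|\leq M$ uniformly on $\mathcal{B}$. I then restrict the fixed-point argument to $\mathcal{B}$, which is complete for the inherited metric.

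For the contraction step, Lemma \ref{Lipbounds} directly provides Lipschitz bounds for $\mathbb{K}[\Phi]C$, $U\Phi$, $\mathbb{W}[C,\Phi]\Phi$ and $\mathbf{P}_\Phi\mathbb{W}[C,\Phi]\Phi$, all with prefactor $T^{3/q-1/2}$. The remaining factor $\G[C]^{-1}-\G[C']^{-1}$ is handled via the resolvent identity $A^{-1}-B^{-1}=A^{-1}(B-A)B^{-1}$, together with the uniform bound $M$ and the local Lipschitz bound $\|\G[C]-\G[C']\|\lesssim R\,\|C-C'\|_{\mathbb{C}^r}$ coming from the quadratic structure of $\G$. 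This yields $\|\G[C(\cdot)]^{-1}-\G[C'(\cdot)]^{-1}\|_{L^\infty_t}\lesssim M^2 R\,\|C-C'\|_{\mathbb{C}^r}$. Combining everything gives $\|\pi(C,\Phi)-\pi(C',\Phi')\|_{\mathcal{X}^{p,q}_T}\leq K_0\,T^{3/q-1/2}\,\|(C,\Phi)-(C',\Phi')\|_{\mathcal{X}^{p,q}_T}$, and shrinking $T$ once more makes the prefactor $\leq 1/2$. Banach's theorem then produces a unique fixed point in $\mathcal{B}$. The continuous dependence \eqref{continiousdependence} follows from the same estimates with two different initial data: the difference of free terms $\|(C^0,\Phi^0)-(C'^0,\Phi'^0)\|_{\mathbb{C}^r\times L^2}$ (which enters $\pi$ linearly through the free propagator, again using \eqref{S2}) is inhomogeneous, and the remainder is absorbed on the left using the contraction constant $1/2$. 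The main obstacle I anticipate is the bookkeeping around $\G[C]^{-1}$: ensuring simultaneously that (i)~$T$ is small enough to keep the image inside $\mathcal{B}$, so invertibility and the bound $M$ propagate, and (ii)~the resolvent identity is tracked through every appearance of $\G[C]^{-1}$ in the Lipschitz estimates without blowing up the Lipschitz constant.
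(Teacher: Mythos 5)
Your proposal is correct and follows essentially the same route as the paper: a Banach fixed-point argument on $\tilde{\mathcal{X}}^{p,q}_T(R)$, the same splitting of the Duhamel terms (ODE part via \eqref{first}, $U\Phi$ and $\mathbb{W}[C,\Phi]\Phi$ in $L^{p',q'}_T$, the projected part in $L^{1,2}_T$), the Strichartz estimates of Lemma \ref{Strilem}, the Lipschitz bounds of Lemma \ref{Lipbounds}, the resolvent identity for $\G[C]^{-1}-\G[C']^{-1}$, and absorption of the difference term to get \eqref{continiousdependence}. The one point where you genuinely improve on the paper is the treatment of $\|\G[C]^{-1}\|$: the paper simply invokes the continuity of the MCTDHF flow to assert a uniform bound $\theta$ on $[0,T^\star)$, which is not quite legitimate for \emph{arbitrary} elements of the ball (they need not be trajectories of the flow), whereas your closed subset $\mathcal{B}$ with $\|C(\cdot)-C^0\|_{L^\infty}\leq\eta$, preserved by $\pi$ for small $T$, makes the uniform invertibility bound rigorous at the level of the iteration itself.
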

\begin{proof}
The proof is based on the Lemma \ref{Lipbounds}. By abuse of notation, $\pi_{C^0,\Phi^0}(C,\Phi) $ will be used instead of the vertical notation  \eqref{duhamel1}. Moreover for notation's lightness, we set  
\[ S(\phi)(t) := \int_{0}^t \mathbf{U}(t-s)\:\phi(s)\:ds.\]
Next, let $(a,b) \in \mathcal{A} $, $(C,\Phi),({C'},{\Phi'}) \in \mathcal{X}_T^{p,q}$, $T> 0$ to be fixed later on and $t\in [0,T]$.  Finally we set $(p=\frac{4d}{3},q=\frac{2d}{d-1}) \in \mathcal{A}$. Then
\begin{eqnarray}\label{maininitialestimate}
\nr \pi_{C^0,\Phi^0}(C(t),\Phi(t)) &-& \pi_{C'^0,\Phi'^0}(C'(t),\Phi'(t))\nr_{\mathbb{C}^r\times L^{a,b}_T} \leq \nonumber\\ && (1+\rho(a))\nr(C^0,\Phi^0) - (C'^0,\Phi'^0) \nr_{\mathbb{C}^r \times L^2}\\
&+& \nr\int_0^t\:\left[\mathbb{K}[\Phi(s)]\:C(s) - \mathbb{K}[\Phi'(s)]\:C'(s)\right]\:ds \nr_{\mathbb{C}^r} + \nr S\left(U\:\Phi - U\:\Phi'\right)(t)\nr_{L^{a,b}_T}\nonumber\\
&+& \nr S\left(\mathbb{L}[C,\Phi]- \mathbb{L}[C',\Phi']\right)(t)\nr_{L^{a,b}_T}:= \mathcal{T}_1 +\ldots +\mathcal{T}_4.\nonumber
\end{eqnarray}
The term $\mathcal{T}_1$ is due to the Lemma \ref{Strilem}. More precisely, the first assertion of \eqref{S2}.  Next, thanks to the inequality \eqref{Lip0} of Lemma \ref{Lipbounds}, we have 
\begin{equation}\label{secondpart}
\mathcal{T}_2 \leq \mathrm{const}_1 \:T^{\frac3q-\frac12}\:\nr(C,\Phi)-(C',\Phi')\nr_{\mathcal{X}^{p,q}_T}.
\end{equation}
Now, we use the second assertion of \eqref{S2} in order to estimate $\mathcal{T}_3$ and $\mathcal{T}_4$. We start with 
\begin{eqnarray}\label{thirdpart}
\mathcal{T}_3 &\leq& \rho(a,p)\: \nr U\:\Phi - U\:\Phi'\nr_{L^{p',q'}_T},\nonumber\\
&\leq & \rho(a,p)\:\kappa \:T^{\frac3q-\frac12} \nr U\nr_{L^d}\:\nr(C,\Phi)-(C',\Phi')\nr_{\mathcal{X}^{p,q}_T}.
\end{eqnarray}
The second line above is due to \eqref{Lip01} where we upper-bounded obviously the $\mathcal{Z}^{p,q}_T$ norm by the $\mathcal{X}^{p,q}_T$ one. Finally
\begin{eqnarray*}
\mathcal{T}_4 &\leq&  \nr S\left(\G^[C]^{-1}\mathbb{W}[C,\Phi]\:\Phi -\G[C']^{-1}\mathbb{W}[C',\Phi']\:\Phi'\right)(t)\nr_{L^{a,b}_T} \\&+& \nr S\left(\mathbf{P}_\Phi\G[C]^{-1}\mathbb{W}[C,\Phi]\:\Phi- \mathbf{P}_{\Phi'}\G[C']^{-1}\mathbb{W}[C',\Phi']\:\Phi'\right)(t)\nr_{L^{a,b}_T}, \\
&\leq & \rho(a,p)\:\nr \G[C]^{-1}\mathbb{W}[C,\Phi]\:\Phi- \G[C']^{-1}\mathbb{W}[C',\Phi']\:\Phi'\nr_{L^{p',q'}_T} \\&+& \rho(a,p)\:\nr \mathbf{P}_\Phi\G[C]^{-1}\mathbb{W}[C,\Phi]\:\Phi- \mathbf{P}_{\Phi'}\G[C']^{-1}\mathbb{W}[C',\Phi']\:\Phi'\nr_{L^{1,2}_T}. 
\end{eqnarray*}
Next, observe the trivial algebraic relation
\begin{eqnarray*}
\nr\G[C]^{-1} - \G[C']^{-1}\nr &\leq& \nr \G[C]^{-1} (\G[C'] - \G[C] ) \G[C']^{-1}\nr,  \\&\leq& \kappa \:\nr \G[C]^{-1}\nr \nr\G[C']^{-1}\nr (\nr C\nr + \nr C'\nr) \nr C-C'\nr_{\mathbb{C}^r}.
\end{eqnarray*}
Thus, by the mean of (\ref{second},\ref{third},\ref{Lip1},\ref{Lip2}), we get
\begin{eqnarray}\label{forthpart}
\mathcal{T}_4 &\leq& \rho(a,p)\:\mathrm{const}_4 \: T^{\frac3q-\frac12}\:\nr(C,\Phi)-(C',\Phi')\nr_{\mathcal{X}^{p,q}_T}.
\end{eqnarray}
More precisely 
\begin{eqnarray*}
\mathrm{const}_4&=& \kappa \nr V\nr_{L^d}\nr\G[C]^{-1}\nr\nr\G[C']^{-1}\nr\nr C'\nr^2_{\mathbb{C}^r} [\nr C\nr_{\mathbb{C}^r}+\nr C'\nr_{\mathbb{C}^r}][1+\nr\Phi'\nr_{L^{\infty,2}_T}\nr\Phi'\nr_{L^{p,q}_T}] \times\\ &\times& \nr\Phi'\nr^2_{L^{\infty,2}_T}\nr\Phi'\nr_{L^{p,q}_T}+ \nr\G[C]^{-1}\nr [\mathrm{const}_2+\mathrm{const}_3].
\end{eqnarray*}
Summing (\ref{secondpart}-\ref{forthpart}) and adding the sum to the first line of \eqref{maininitialestimate} leads to
\begin{eqnarray}
\lefteqn{\nr \pi_{C^0,\Phi^0}(C(t),\Phi(t)) - \pi_{C'^0,\Phi'^0}(C'(t),\Phi'(t))\nr_{\mathbb{C}^r\times L^{a,b}_T} \leq (1+\rho(a))\nr(C^0,\Phi^0) - (C'^0,\Phi'^0) \nr_{\mathbb{C}^r \times L^2}}\nonumber\\ 
 &+& \big[\mathrm{const}_1 + \rho(a,p)\:\mathrm{const}_4 +\rho(a,p)\:\kappa \:\nr U\nr_{L^d}\big]\:T^{\frac3q-\frac12}\:\nr(C,\Phi)-(C',\Phi')\nr_{\mathcal{X}^{p,q}_T}. \label{contractionestimate} 
\end{eqnarray}
Now, the inequality \eqref{contractionestimate} holds for any admissible pair $(a,b)\in \mathcal{A}$.
Therefore, we write it in the special case $(a,b)=(p,q)\in \mathcal{A}$ and then in the case $(a,b)=(\infty,2)\in \mathcal{A}$. Moreover, we set $(C^0,\Phi^0) \equiv (C'^0,\Phi'^0) $ and use $\nr C,\Phi\nr_{\mathcal{X}_T^{p,q}}, \nr C',\Phi'\nr_{\mathcal{X}_T^{p,q}} \leq R $ since $(C,\Phi),(C',\Phi') \in \tilde{\mathcal{X}}_T^{p,q}(R)$. Recall that $T< T^\star$ since the initial data is in $\partial \mathcal{F}_{N,K}$, that is, the matrix $\G[C] $ and $\G[C'] $ are invertible thus there exists $ \theta >0$ such that $\nr\G[C]^{-1}\nr_{L^\infty(0,T^\star)},\nr \G[C']^{-1}\nr_{L^\infty(0,T^\star)} \leq \theta$. This procedure leads to
\begin{eqnarray}\label{contract1}
\nr \pi_{C^0,\Phi^0}(C(t),\Phi(t)) - \pi_{C^0,\Phi^0}(C'(t),\Phi'(t))\nr_{\mathcal{X}^{p,q}_T} &\leq& \mathrm{const}_5(\kappa,R,\theta,p)\:T^{\frac3q-\frac12}\times \nonumber \\ &\times&\:\nr(C,\Phi)-(C',\Phi')\nr_{\mathcal{X}^{p,q}_T}.  
\end{eqnarray}
Equivalently, if we set $(C',\Phi')\equiv (0,0)$ and use the fact that $\nr C^0,\Phi^0\nr_{\mathbb{C}^r\times L^2} \leq \beta$.  Again, after summation
\begin{eqnarray}\label{contract2}
\nr \pi_{C^0,\Phi^0}(C(t),\Phi(t)) \nr_{\mathcal{X}^{p,q}_T} \leq  \mathrm{const}_6(\kappa,R,\theta,\beta,p)\:T^{\frac3q-\frac12}.
\end{eqnarray}
More precisely
\begin{eqnarray*}
&&\mathrm{const}_5= 2 \kappa R^4\nr{V}\nr_{L^d}[5+\rho(p)(2\theta^2R^4+(2\theta+7)\theta R^2 +5\theta)]+\rho(p)\kappa\nr U\nr_{L^d},\\
&& \mathrm{const}_6= 2(1+\rho(p)) +2\kappa R^5[\nr{V}\nr_{L^d}+\rho(p)\theta\nr{V}\nr_{L^d}(1+R^2)] +2R\rho(p)\kappa \nr U\nr_{L^d}.
\end{eqnarray*}
Thus, we choose $R$ and $T$ such that
\begin{equation}\label{radiusandtime}
T < \inf\left\lbrace  T^\star, \left[\frac{R}{\mathrm{const}_6}\right]^{\frac{2q}{6-q}}\right\rbrace,\quad   R\:\mathrm{const}_5-\mathrm{const}_6 < 0.
\end{equation}
That is 
\begin{equation}\label{neededafteron}
\mathrm{const}_6 \:T^{\frac3q-\frac12} < R \quad \text{and}\quad \mathrm{const}_5\:T^{\frac3q-\frac12} < 1.
\end{equation}
Hence, by (\ref{contract1}, \ref{contract2}), $\pi_{C^0,\Phi^0} $ is a strict contracting map on $\tilde{\mathcal{X}}_T^{p,q}$. Eventually, it remains to prove the continuous dependence on the initial data \eqref{continiousdependence}. Again, the essence is the inequality \eqref{contractionestimate}. Let $(C,\Phi)$ and $(C',\Phi')$ be the fixed points of 
$\pi_{C^0,\Phi^0}$ and $\pi_{C'^0,\Phi'^0} $ respectively. Let us write again the inequality for $(a,b)=(p,q)$ and then in the case $(a,b)=(\infty,2)$ and sum up. One obtains 
\begin{eqnarray*}
\lefteqn{\nr \pi_{C^0,\Phi^0}(C(t),\Phi(t)) - \pi_{C'^0,\Phi'^0}(C'(t),\Phi'(t))\nr_{\mathcal{X}_T^{p,q}} = \nr (C,\Phi) - (C',\Phi')\nr_{\mathcal{X}_T^{p,q}}} \\ &&\leq 2 (1+\rho(p))\nr(C^0,\Phi^0) - (C'^0,\Phi'^0) \nr_{\mathbb{C}^r \times L^2}+ \mathrm{const}_5(\kappa,R,\theta,p)\:T^{\frac3q-\frac12}\nr(C,\Phi)-(C',\Phi')\nr_{\mathcal{X}^{p,q}_T}. 
\end{eqnarray*}
Finally, the estimate above with \eqref{neededafteron} proves the continuous dependence on the initial data \eqref{continiousdependence}.
\end{proof}
Now, we propose the following crucial proposition 
\begin{prop} \label{localexistence}
Let $(C^0,\Phi^0) \in \partial\mathcal{F}_{N,K}, U,V \in L^d(\mathbb{R}^3)$ with $d>\frac32$ and $(p,q=\frac{2d}{d-1}) \in \mathcal{A}$. Then there exists a $T(p,\nr C\nr_{\mathbb{C}^r},\nr \Phi\nr_{L^2},\nr \Phi^0\nr_{L^2} )>0$ and a unique solution $(C(t),\Phi(t)) \in \mathcal{X}^{p,q}_{T}$ to
\begin{eqnarray}\label{integralformulation}
\begin{bmatrix}C(t) \\ \\ \Phi(t) \end{bmatrix}=\begin{bmatrix}C^0 \\ \\ \mathbf{U}(t)\:\Phi^0\end{bmatrix}-i\int_0^t\begin{bmatrix}\mathbb{K}[\Phi(s)]\,C(s)\\ \\  \mathbf{U}(t-s)\:\Big[U\:\Phi(s) +\mathbb{L}[C(s),\Phi(s)]\Big]
\end{bmatrix} \,ds.
\end{eqnarray} 
 In particular, the function part of the solution satisfies
\begin{equation}\label{aprioriestimate}
\nr \Phi\nr_{L^{p,q}_{T}} \leq 2 \:\rho(p)\: \nr \Phi^0\nr_{L^2}.
\end{equation}
\end{prop}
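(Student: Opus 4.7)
The plan is to apply the Banach fixed point theorem in the form guaranteed by Lemma~\ref{ContractionLemma}. That lemma already exhibits a radius $R>0$ and a time $0<T<T^\star$ for which $\pi_{C^0,\Phi^0}$ is a strict contraction mapping the closed ball $\tilde{\mathcal{X}}_T^{p,q}(R)$ into itself, so there is a unique fixed point $(C,\Phi) \in \tilde{\mathcal{X}}_T^{p,q}(R)$; by the very definition of $\pi_{C^0,\Phi^0}$ in \eqref{duhamel1}, this fixed point solves the Duhamel system~\eqref{integralformulation}. The dependence of $T$ on $p$, $\nr\Phi^0\nr_{L^2}$, $\nr C^0\nr_{\mathbb{C}^r}$, and the bound $\theta$ on $\nr\G[C]^{-1}\nr$ follows from the explicit choice~\eqref{radiusandtime}, so the existence statement is immediate.

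Uniqueness in all of $\mathcal{X}_T^{p,q}$ (rather than only in the ball of radius $R$) is obtained by a bootstrap argument. Given two solutions $(C,\Phi)$ and $(C',\Phi')$ with identical initial data, one enlarges $R$ to dominate both their $\mathcal{X}_T^{p,q}$-norms and applies \eqref{contractionestimate} on a subinterval $[0,T_1]$ with $T_1>0$ small enough that $\mathrm{const}_5\,T_1^{3/q-1/2}<1$; this forces the two solutions to coincide on $[0,T_1]$. A finite covering of $[0,T]$ by such intervals, together with the restart of the argument from each new initial time, propagates equality to the whole interval.

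For the a priori estimate~\eqref{aprioriestimate}, I take the $L^{p,q}_T$-norm of the second component of~\eqref{integralformulation}. The first Strichartz inequality of Lemma~\ref{Strilem} bounds the free evolution by
\[
\nr \mathbf{U}(\cdot)\Phi^0\nr_{L^{p,q}_T} \leq \rho(p)\,\nr\Phi^0\nr_{L^2}.
\]
The three inhomogeneous terms are treated by the second Strichartz inequality combined with Corollary~\ref{corestimates}: estimate~\eqref{potestimate} controls the potential term $U\Phi$ in $L^{p',q'}_T$, estimate~\eqref{second} controls the regular part $\G[C]^{-1}\mathbb{W}[C,\Phi]\Phi$ in the same space after absorbing $\nr\G[C]^{-1}\nr \leq \theta$, and estimate~\eqref{third} controls the projection part in $L^{1,2}_T$, which is handled by a second Strichartz estimate with the endpoint admissible pair $(a,b)=(\infty,2)$. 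Each contribution comes with a positive power $T^{3/q-1/2}$ multiplied by quantities bounded uniformly by $R$ and $\theta$ on $\tilde{\mathcal{X}}_T^{p,q}(R)$; shrinking $T$ once more if necessary beyond~\eqref{radiusandtime} so that the cumulative inhomogeneous contribution is dominated by $\rho(p)\nr\Phi^0\nr_{L^2}$ then yields the factor $2\rho(p)\nr\Phi^0\nr_{L^2}$ of~\eqref{aprioriestimate}.

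The main obstacle is essentially bookkeeping: the projection part $\mathbf{P}_\Phi\mathbb{W}[C,\Phi]\Phi$ is not naturally in $L^{p',q'}_T$ but in $L^{1,2}_T$, which forces an additional application of Strichartz with the endpoint pair $(\infty,2)$ rather than $(p,q)$. Once this mild technicality is absorbed, the proposition is an essentially routine consequence of the contraction already established in Lemma~\ref{ContractionLemma} together with the smoothing properties of $\mathbf{U}(t)$.
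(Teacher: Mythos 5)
Your existence and uniqueness argument coincides with the paper's: both read the solution off as the fixed point of $\pi_{C^0,\Phi^0}$ furnished by Lemma \ref{ContractionLemma}, and your bootstrap extending uniqueness from the ball $\tilde{\mathcal{X}}_T^{p,q}(R)$ to all of $\mathcal{X}_T^{p,q}$ is a standard supplement the paper leaves implicit. Where you genuinely diverge is in the proof of the a priori estimate \eqref{aprioriestimate}. You close the estimate by majorizing every occurrence of $\nr\Phi\nr_{L^{p,q}_T}$ and $\nr\Phi\nr_{L^{\infty,2}_T}$ on the right-hand side by the ball radius $R$ and then shrinking $T$ until the inhomogeneous contribution is below $\rho(p)\,\nr\Phi^0\nr_{L^2}$. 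The paper instead keeps the inequality self-referential: setting $\mathcal{Y}(\tau)=\nr\Phi\nr_{L^{p,q}_\tau}$, the estimates \eqref{potestimate}, \eqref{second}, \eqref{third} give $\mathcal{Y}(\tau)\le \rho(p)\nr\Phi^0\nr_{L^2}+\kappa\theta\,T^{\frac3q-\frac12}P(\mathcal{Y}(\tau))$ with $P$ polynomial (quadratic in $\mathcal{Y}$), and then follows Tsutsumi's continuity argument: choose $T$ so that $f(\eta,T)$ admits a root $\eta_0\le 2\rho(p)\nr\Phi^0\nr_{L^2}$, use $\mathcal{Y}(0)=0$ and continuity of $\tau\mapsto\mathcal{Y}(\tau)$ to conclude that $\mathcal{Y}$ never crosses $\eta_0$, and finish with Fatou's lemma. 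The payoff of the paper's version is that the admissible $T$ in \eqref{timeestimate} is expressed directly in terms of $\nr C\nr_{\mathbb{C}^r}$, $\nr\Phi\nr_{L^{\infty,2}}$, $\nr\Phi^0\nr_{L^2}$ and $\theta$ --- exactly the quantities that the flow conserves or controls --- which is what licenses iterating the local result up to $T^{\star}$ in the proof of Theorem \ref{L2thm}. Your version reaches the same bound, but the time you produce depends on $R$; this is harmless only because $R$ in Lemma \ref{ContractionLemma} can itself be chosen as a function of $\beta$ and $p$ alone, a point you should state explicitly, since otherwise the dependence claimed in the statement of the Proposition (and needed for the continuation argument it feeds) would not be justified.
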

\begin{proof}
Thanks to Lemma \ref{ContractionLemma}, there exists a $T'>0$ for which the integral formulation  \eqref{integralformulation} associated to the MCTDHF admits a unique solution $(C(t),\Phi(t)) \in \mathcal{X}_{T'}^{p,q}$. The main point then is to characterize $0< T\leq T'$ for which the property \eqref{aprioriestimate} holds. The function part of the solution satisfies 
\[
\Phi(t) =\mathbf{U}(t)\Phi^0 -i\:S(\mathbb{L}[C,\Phi])(t)
\]
Let $0< \tau \leq T \leq T' $ where $T$ is to be fixed later on and recall that $\nr \G[C(t)]^{-1}\nr_{L^\infty(0,T^\star)} \leq \theta $. Next, we estimate the $L^{p,q}_\tau $ of the right hand side and get using the estimate \eqref{S2} of Lemma \ref{Strilem}.\\
\begin{eqnarray}
\nr \Phi\nr_{L^{p,q}_\tau} &\leq& \rho(p) \:\nr\Phi^0\nr_{L^2}  +\nr S(U\:\Phi)(t) \nr_{L^{p,q}_\tau} \nonumber +\nr S(\G[C]^{-1}\:\mathbb{W}[C,\Phi]\:\Phi)(t) \nr_{L^{p,q}_\tau} \nonumber\\ &&\nonumber\\
&+& \nr S(\G[C(s)]^{-1}\:\mathbf{P}_{\Phi}\mathbb{W}[C,\Phi]\:\Phi)(t)\nr_{L^{p,q}_\tau}, \nonumber\\&&\nonumber\\
&\leq & \rho(p) \:\nr\Phi^0\nr_{L^2} + \nr U\:\Phi\nr_{L^{p',q'}_\tau}+\nr \G[C]^{-1}\nr\:\nr\mathbb{W}[C,\Phi]\:\Phi\nr_{L^{p',q'}_\tau} \nonumber \\&&\nonumber\\	&+& \nr \G[C]^{-1}\nr\:\nr\mathbf{P}_\Phi\:\mathbb{W}[C,\Phi]\:\Phi\nr_{L^{1,2}_\tau},\nonumber\\
&&\nonumber \\
&\leq & \rho(p) \:\nr\Phi^0\nr_{L^2} +\kappa \:\theta \:T^{\frac3q -\frac12} \:\nr U\nr_{L^d}\:\nr \Phi\nr_{L^{p,q}_\tau} +\kappa \:\theta \:T^{\frac3q -\frac12} \:\nr V\nr_{L^d} \:\nr C\nr^2_{\mathbb{C}^r} \:\nr \Phi\nr^2_{L^{\infty,2}_\tau} \nr \Phi\nr_{L^{p,q}_\tau}\nonumber\\ &&\nonumber\\
& + & \:\kappa \:\theta \:T^{\frac3q -\frac12} \:\nr V\nr_{L^d} \:\nr C\nr^2_{\mathbb{C}^r} \:\nr \Phi\nr^3_{L^{\infty,2}_\tau} \:\nr \Phi\nr^2_{L^{p,q}_\tau}. \label{estimatepq}
\end{eqnarray}
Next, we follow the argument of Tsutsumi in \cite{l2Tsutsumi}. That is, we choose $T$ so small so that there exists a positive number $\eta $ satisfying 
\begin{equation*}\label{timeestimateapriori}
\left\lbrace
\begin{array}{ll}
  & f(\eta,T):=\rho(p) \: \nr\Phi^0\nr_{L^2}-\eta + \kappa\:\theta\:\eta \: T^{\frac3q -\frac12}\left(\nr U\nr_{L^d}+\nr {V}\nr_{L^d}\nr C\nr^2_{\mathbb{C}^r} \:\nr \Phi\nr^2_{L^{\infty,2}_\tau}\left[1+\eta\:\:\nr \Phi\nr_{L^{\infty,2}_\tau}\right]\right)<0 \\ 
  &      \\
  &   0 < \eta \leq 2\:\rho(p)\:\nr \Phi^0\nr_{L^2}.    
\end{array}
\right.
\end{equation*}
For that purpose, it is sufficient to choose $T> 0$ such that 
\begin{equation}\label{timeestimate}
T< \inf\left\{ \left(2\:\kappa\:\theta \left[\nr U\nr_{L^d}+\nr {V}\nr_{L^d}\nr C\nr^2_{\mathbb{C}^r} \:\nr \Phi\nr^2_{L^{\infty,2}_\tau}\left[1+2\:\rho(p)\:\nr \Phi^0\nr_{L^2}\:\nr \Phi\nr_{L^{\infty,2}_\tau}\right]\right]\right)^{\frac{2q}{q-6}},T'\right\}.
\end{equation}
Then, let 
\begin{equation}\label{eta0}
\eta_0= \min_\eta \left\lbrace 0<\eta\leq 2\:\rho(p)\:\nr \Phi^0\nr_{L^2}\quad;\quad f(\eta,T)=0  \right\rbrace.
\end{equation}
Now, let $\mathcal{Y}(\tau) = \nr \Phi\nr_{L^{p,q}_\tau}$. Then, by \eqref{estimatepq}, we have for $0\leq \tau \leq T$ 
\begin{equation}\label{timesequence}
\left\lbrace
\begin{array}{ll}
  & \mathcal{Y}(\tau)\leq \rho(p) \: \nr\Phi^0\nr_{L^2}+ \kappa\:\theta\:\mathcal{Y}(\tau) \: T^{\frac3q -\frac12}\left(\nr U\nr_{L^d}+\nr {V}\nr_{L^d}\nr C\nr^2_{\mathbb{C}^r} \:\nr \Phi\nr^2_{L^{\infty,2}_\tau}\left[1+\mathcal{Y}(\tau)\:\:\nr \Phi\nr_{L^{\infty,2}_\tau}\right]\right) ,  \\
  &      \\
  &   \mathcal{Y}(\tau=0)=0.    
\end{array}
\right.
\end{equation}	
Then, if $T$ is chosen such that \eqref{timeestimate} holds, then by (\ref{eta0}, \ref{timesequence}), we get 
\[
 \mathcal{Y}(\tau)\leq \eta_0 \leq 2\:\rho(p)\:\nr \Phi^0\nr_{L^2}, \quad \text{for all}\quad \tau \in [0,T].
\] 
Thus, passing to the limit as $\tau\rightarrow T $, we get by Fatou's Lemma
\begin{equation*}\label{estimateresult}
\nr \Phi\nr_{L^{p,q}_T} \leq 2\:\rho(p)\:\nr \Phi^0\nr_{L^2},
\end{equation*}
which is the desired inequality.
\end{proof}
\section{proof of Theorem \ref{L2thm}}
This section is devoted to the proof of the main result Theorem \ref{L2thm}. It relies mainly on proposition \ref{localexistence} and Theorem \ref{H1thm} which assures the existence and uniqueness of a solution, in short time, to the MCTDHF system with function part in the Sobolev space $H^1$. 
 \vskip6pt
From the one side, let $(C^0,\Phi^0) \in \partial\mathcal{F}_{N,K}$ by proposition \ref{localexistence}, there exists a certain time $T_0>0$ such that the integral formulation \eqref{integralformulation} admits a unique solution $(C(t),\Phi(t))$ in the space $\mathcal{X}^{p,q}_{T_0}$.\vskip6pt
Now, one writes one more time the Duhamel formula associated to the function part of the system $\mathcal{S}$, more precisely, the second component of \eqref{duhamel}. Next, one uses the estimates (\ref{potestimate},\ref{second},\ref{third}) of Corollary \ref{corestimates} , as in \eqref{maininitialestimate} with $(C',\Phi')\equiv (0,0)$, and gets thanks to continuous embedding
\[L^{q'}(\mathbb{R}^3) \hookrightarrow H^{-1}(\mathbb{R}^3), \quad \text{for all} \quad(p,q)\in \mathcal{A} \]
and the absolute continuity of the integral (the Duhamel formulation) that 
\[\Phi \in C([0,T_0],H^{-1}) \quad \text{thus}\quad \Phi\in C_{w}([0,T_0], L^2)\]
 where the subscript $_w$ stands for weak. 
 \vskip6pt
From the opposite side, let $(C^{0,n},\Phi^{0,n})\in  \partial \mathcal{F}_{N,K} \cap H^1(\mathbb{R}^3)^K $ be a sequence of initial data for $ n\in \mathbb{N}$ such that 
\begin{equation}\nr C^{0,n},\Phi^{0,n} \nr_{\mathbb{C}^r\times L^2}  \leq \nr C^{0},\Phi^{0} \nr_{\mathbb{C}^r\times L^2},\quad  (C^{0,n},\Phi^{0,n}) \stackrel{n\rightarrow +\infty}{ \longrightarrow}\: (C^{0},\Phi^{0})\quad \text{in}\quad \mathbb{C}^r \times L^2.\label{sequence}\end{equation} \\
The Theorem \ref{H1thm} guarantees the existence of a time $\tilde T_n >0$, the existence and uniqueness of a solution in the interval of time $t\in [0,\tilde T_n) $ such that 
\begin{equation}\label{continuity}C^n\in C^1([0,\tilde T_n), \mathbb{C})^r,\quad \Phi^n\in C^0([0,\tilde T_n), H^1(\mathbb{R}^3))^K.\end{equation}\\
This solution satisfies the following conservation laws
\[
\nr C^n(t)\nr_{\mathbb{C}^r} = \nr C^{0,n}\nr_{\mathbb{C}^r},\quad \nr \Phi^n(t)\nr_{L^2}=\nr \Phi^{0,n}\nr_{L^2},\quad \forall\: t\in [0,\tilde T_n),
\] \\
for all $n\in \mathbb{N}$. Moreover, the solutions $(C^n,\Phi^n)$ being continuously depending on the initial data, one can assume (up to the extraction of a subsequence) that the sequence $\tilde T_n$ converges towards $T^\star$ where $T^\star$ denotes the maximal time of wellposedness of the Cauchy problem associated to the system $\mathcal{S}$ with initial data $(C^0,\Phi^0)$.
\vskip6pt
Then, by \eqref{sequence}, $(C^0,\Phi^0)$ and $(C^{0,n},\Phi^{0,n}) $ are fixed points of the functions $\pi_{C^{0},\Phi^{0}}$ and $\pi_{C^{0,n},\Phi^{0,n}}$ respectively on the same closed ball of $\mathcal{X}^{p,q}_{T_0}$ for all $(p,2\leq q=\frac{2d}{d-1} < 6) \in \mathcal{A}$. Thus, one can pass to the limit as $n\rightarrow +\infty $ using the continuous dependence on the initial data \eqref{continiousdependence} and \eqref{sequence}. We get
\begin{equation}\label{theestimate}\nr C(t)\nr_{\mathbb{C}^r}=\nr C^0\nr_{\mathbb{C}^r},\quad \nr\Phi(t)\nr_{L^2}= \nr\Phi^0\nr_{L^2},\quad \text{for all} \quad 0\leq t\ <\inf\:\left\{T_0,\:T^\star\right\}.
\end{equation}
Hence, \eqref{theestimate} with $\Phi\in C_{w}([0, \inf\:\left\{T_0,\:T^\star\right\}), L^2) $ leads to $\Phi\in C([0, \inf\:\left\{T_0,\:T^\star\right\}), L^2)$. Moreover, the fact that $C\in C^1([0, \inf\:\left\{T_0,\:T^\star\right\}),\mathbb{C})^r$ follows from the continuity of $t\mapsto \mathbb{K}[\Phi(t)] $ for all $t\in [0, \inf\:\left\{T_0,\:T^\star\right\})$ which is a consequence of the continuity of the $\phi_i's$.
\vskip6pt
Eventually, recall  that the function part of this solution satisfies the \textit{a priori} estimate \eqref{aprioriestimate}. In particular, the time $T_0$ depends only on the constants $N,K,p,\nr C^0\nr_{\mathbb{C}^r}$  and $\nr \Phi^0\nr_{L^2}$. Thus, one can reiterate the argument with initial data $(C(\inf\:\left\{T_0,\:T^\star\right\}),\Phi(\inf\:\left\{T_0,\:T^\star\right\})), (C(2\:\inf\:\left\{T_0,\:T^\star\right\}),\Phi(2\:\inf\:\left\{T_0,\:T^\star\right\})), \ldots$ up to $T^{\star-}$. We now check the uniqueness of the solution to the integral formulation associated to the MCTDHF, namely \eqref{duhamel}. The uniqueness on $[0,\inf\:\left\{T_0,\:T^\star\right\}]$ is given for free by the contraction argument. Again, one can reiterate up to $T^{\star-}$ and get the uniqueness on the whole interval $[0,T^\star)$. 
The points $\mathrm{i})$ and $\mathrm{ii})$ are straightforward and the last point to make clear before finishing the proof of the Theorem \ref{L2thm}, is the equivalence between the MCTDHF as an integral formulation and in the distributional sense. For that purpose, we refer, for instance, to \cite{l2Castella} (paragraph 4: Proof of Theorem 2.2, more precisely, the uniqueness part of the proof).
\vskip6pt
Recall that the Coulomb potential satisfies after cut-off $\frac{1}{|x|} \in L^d +L^\infty $ with $ d\in \left[\frac{3}{2},3\right[$. Thus, for $p=\frac{2d}{d-1}$, we have obviously (omitting the case $d=\frac{3}{2}$) $3< q < 6$ which is the result obtained by Castella in \cite{l2Castella} for the SPS system. Moreover, notice that the estimates of the Lemma \ref{Lipbounds} hold true for bounded potentials. In particular in such case one can show that the mapping 
\begin{eqnarray*}
\begin{bmatrix} C \\ \\ \Phi \end{bmatrix}\mapsto\begin{bmatrix}\mathbb{K}[\Phi]\,C \\ \\ U\:\Phi +\G[C]^{-1}\:(I-\mathbf{P}_\Phi)\:\mathbb{W}[C,\Phi]\:\Phi(s)
\end{bmatrix} \,ds,
\end{eqnarray*} 
is locally lipschitz in $\mathbb{C}^r \times L^2$. This shows in particular the restriction of the results obtained in \cite{l2Lubich1} since in this configuration we need only to prove the preservation of $\mathcal{F}_{N,K}$ by the MCTDHF flow as long as the density matrix is of full rank. 
\section{some comments and conclusion}
For the time being, all the well-posedness results on the multiconfiguration models \cite{l2Lubich1,l2trabelsi,l2Bardos1,l2Bardos12} hold up to a certain time $T^\star$, possibly infinite, for which  $\mathrm{rank}(\G[C(T^\star)]) < K$. In \cite{l2Bardos1,l2Bardos12}, we obtained with C. Bardos and I. Catto a sufficient condition that ensures the global well-posedness. More precisely, starting with initial data $(C^0,\Phi^0) \in \mathcal{F}_{N,K} \cap H^1(\mathbb{R}^3)^K$ satisfying
\begin{equation*}
\biggl\langle \Psi_{\mathrm{MC}}(C^0,\Phi^0)|{H}_N|\Psi_{\mathrm{MC}}(C^0,\Phi^0)\biggr\rangle_{L^2(\mathbb{R}^{3N})} < \min_{(C,\Phi)\in \mathcal{F}_{N,K-1}}\biggl\langle \Psi_{\mathrm{MC}}(C,\Phi)|{H}_N|\Psi_{\mathrm{MC}}(C,\Phi)\biggr\rangle_{L^2(\mathbb{R}^{3N})},
\end{equation*}
then, the MCTDHF system $\mathcal{S}$ admits a unique global-in-time solution $(C(t),\Phi(t))$ in the class 
\begin{equation*}
C^1\big([0,+\infty);\mathbb{C}\big)^r \times
\Big(C^1\big([0,+\infty);H^{-1}(\mathbb{R}^3)\big)^K \cap 
C^0\big([0,+\infty);H^{1}(\mathbb{R}^3)\big)^K\Big).
\end{equation*} 
However, such a condition is not possible for an $L^2$ theory since the energy of the initial data is not well-defined for $(C^0,\Phi^0) \in \mathcal{F}_{N,K}$. 
In order to remediate this problem in practice in numerical simulations, physicists and chemists 
use to perturb the density matrix \cite{l2Meyer,l2Scrinzi} (see also \cite{l2Lubich1}). 
In order to illustrate their method we use, without loss of generality, the following perturbation 
\[\G_\epsilon[C] = \G[C] +\epsilon\: I_K,\]
with $I_K$ being the $K\times K$ identity matrix. 
The MCTDHF system $\mathcal{S}$ (\ref{Sdef}) with $\G_\epsilon[C]$ instead of $\G[C]$ 
is then $\epsilon-$dependent. Then, observe that the $L^2$ theory we proved in the present paper is also global for $\mathcal{S}_\epsilon $ since the time $T$ of the Proposition \ref{localexistence} depends only on quantities that the flow $\mathcal{S}_\epsilon$ conserves. Consequently, one can iterate the argument in time and cover the whole real line. Notice that $\mathcal{S}_\epsilon$ does not conserve the total energy because of the perturbation. We are then able to claim the following corollary for $\mathcal{S}_\epsilon$
\begin{cor} 
Let $U,V\in L^{d}(\mathbb{R}^3)$ with $d> \frac32$ and $(C^0,\Phi^0) \in \mathcal{F}_{N,K}$ be an initial data. Then, the perturbed MCTDHF system $\mathcal{S}_\epsilon$ admits solutions $(C_\epsilon(t),\Phi_\epsilon(t)) $ satisfying 
\begin{itemize}
\item $C_\epsilon \in C^1([0,+\infty), \mathbb{C})^r\quad \text{and}\quad \Phi_\epsilon \in C^0([0,+\infty), L^2(\mathbb{R}^3))^K. $ 
\end{itemize}
Moreover, for all $2\leq q < 6$ 
\begin{itemize}
\item[$\mathrm{i})$] $\Phi_\epsilon(t) \in L^{\frac{4q}{3(q-2)}}_{loc}([0,+\infty), L^q(\mathbb{R}^3))^K $.\\
\item[$\mathrm{ii})$] The solution $(C_\epsilon(t),\Phi_\epsilon(t))$ is unique in the class
\[ L^\infty([0,+\infty), \mathbb{C})^r \times L^{\infty}([0,+\infty), L^2(\mathbb{R}^3))^K\cap L^{\frac{4q}{3(q-2)}}_{loc}([0,+\infty), L^q(\mathbb{R}^3))^K. \]
\item[$\mathrm{iii})$] $(C_\epsilon(t),\Phi_\epsilon(t))\in \partial \mathcal{F}_{N,K}$ for all $t>0$. 
\end{itemize}
\end{cor}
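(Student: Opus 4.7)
The plan is to follow the argument of Theorem \ref{L2thm} essentially verbatim, with $\G[C]$ replaced by $\G_\epsilon[C]=\G[C]+\epsilon I_K$ throughout. The key observation is that $\G[C]$ is Hermitian positive semi-definite (it is the one-body density matrix of a fermionic pure state, hence $0\le\G[C]\le I_K$), so $\G_\epsilon[C]\succeq \epsilon I_K$ uniformly in $C$ and
\[
\nr \G_\epsilon[C(t)]^{-1}\nr \le \tfrac{1}{\epsilon}
\]
for every admissible $C$. Hence the quantity denoted $\theta$ in the proofs of Lemma \ref{ContractionLemma} and Proposition \ref{localexistence} may now be replaced by the fixed constant $1/\epsilon$, independently of $C(t)$ and of time. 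This single substitution is the entire content of the improvement: the full-rank hypothesis on the initial data becomes vacuous for $\mathcal{S}_\epsilon$.

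First I would verify that the two structural invariants of the unperturbed system survive the regularization. The ODE for $C$ is unchanged and $\mathbb{K}[\Phi]$ is Hermitian, so $\nr C_\epsilon(t)\nr_{\mathbb{C}^r}$ is preserved. For orbital orthonormality one computes $\frac{d}{dt}\langle\phi_i,\phi_j\rangle$ along $\mathcal{S}_\epsilon$ and uses that $-\tfrac12\Delta+U$ is self-adjoint and that each component of $(I-\mathbf{P}_\Phi)\mathbb{W}[C,\Phi]\Phi$ lies in $\mathrm{Ran}(\mathbf{P}_\Phi)^\perp$ by construction; multiplying on the left by $\G_\epsilon[C]^{-1}$ merely takes linear combinations of vectors already orthogonal to every $\phi_k$, so the nonlinear contribution to $\frac{d}{dt}\langle\phi_i,\phi_j\rangle$ vanishes. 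Thus $(C_\epsilon(t),\Phi_\epsilon(t))\in\mathcal{F}_{N,K}$ for as long as the solution exists, and $\nr\Phi_\epsilon(t)\nr_{L^2}$ is conserved.

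Next I would rerun Lemma \ref{ContractionLemma} and Proposition \ref{localexistence} for $\mathcal{S}_\epsilon$: all estimates of Section 3 go through unchanged, except that $\theta$ is replaced by $1/\epsilon$. The local time $T_0$ produced by Proposition \ref{localexistence} then depends only on $N,K,p,\nr U\nr_{L^d},\nr V\nr_{L^d},\epsilon,\nr C^0\nr_{\mathbb{C}^r},\nr \Phi^0\nr_{L^2}$, of which the last two are preserved by the flow and the rest are fixed. Since $T_0$ does not shrink as the solution evolves, one can iterate the contraction argument on the successive windows $[kT_0,(k+1)T_0]$ and cover $[0,+\infty)$. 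The $a\ priori$ bound \eqref{aprioriestimate} applied on each window yields the $L^{4q/3(q-2)}_{loc}([0,+\infty),L^q)$-regularity of (i); uniqueness on each window comes from Lemma \ref{ContractionLemma} and is glued across windows to produce (ii); continuity of $C_\epsilon$ and $\Phi_\epsilon$ in time follows from the same bootstrap used in Theorem \ref{L2thm}. Assertion (iii) is automatic, since $\G_\epsilon[C_\epsilon(t)]$ is invertible by construction for every $t$.

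The only delicate point, which I would double-check, is the compatibility of the $\epsilon$-perturbation with the gauge choice that produced the projector $I-\mathbf{P}_\Phi$ in $\mathcal{S}$: as sketched above, the relevant cancellation reduces to the structural identity $\mathrm{Ran}(I-\mathbf{P}_\Phi)\perp\mathrm{Ran}(\mathbf{P}_\Phi)$, which is independent of whichever invertible matrix is placed in front. Once this is settled, the remaining work is an essentially mechanical transcription of the $L^2$ proof of Theorem \ref{L2thm}, and the regularization's only quantitative effect is to trade the possibly singular quantity $\nr \G[C(t)]^{-1}\nr$ for the uniform constant $1/\epsilon$, which is exactly what promotes the local $L^2$ theory to a global one.
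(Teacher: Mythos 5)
Your proposal is correct and follows essentially the same route as the paper, which simply observes that for $\mathcal{S}_\epsilon$ the local existence time of Proposition \ref{localexistence} depends only on quantities conserved by the flow, so the contraction argument can be iterated to cover $[0,+\infty)$. You merely make explicit two points the paper leaves implicit: the uniform bound $\nr \G_\epsilon[C]^{-1}\nr \le 1/\epsilon$ coming from positive semi-definiteness of $\G[C]$ (which replaces the constant $\theta$), and the preservation of orthonormality of the orbitals under the perturbed flow.
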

A global $H^1$ theory on the system $\mathcal{S}_\epsilon$ is not possible since the total energy is
not conserved, of course, since  $\G[C]\:\G_\epsilon[C]^{-1}\neq I_K$. We mention that energy conservation
can be proved using a variational principle (cf \cite{l2Bardos12}) or in a direct way by working only 
on the equations of the flow $\mathcal{S}$ (\cite{l2trabelsithesis}). 
Moreover we do not believe that the perturbed energy
$$ \Bigl\langle \left(\left[\sum_{i=1}^N\:-\frac12\:\Delta_{x_i} +U(x_i)\right]\:\G_{\epsilon}[C] + \frac 12{\mathbb{W}_{C,\Phi}}\right)\:\Phi\:,\:\Phi\Bigr\rangle$$
is decaying during the time evolution. An alternative strategy will be to pass to the limit $\epsilon \rightarrow 0$, however this depends on wether or not the system $\mathcal{S}_\epsilon$ satisfies a uniform (in $\epsilon$) estimate which is related to the non-decay of the perturbed energy. We hope to come back to this point in a forthcoming work. 
\vskip6pt \noindent\textbf{Acknowledgements} This work was supported
by the Viennese Science Foundation (WWTF) via the project "TDDFT" (MA-45),
the Austrian Science Foundation (FWF) via the Wissenschaftkolleg "Differential 
equations" (W17) and the START Project (Y-137-TEC) 
and the EU funded Marie Curie Early Stage Training Site DEASE (MEST-CT-2005-021122). \\
We thank Claude Bardos and Isabelle Catto as well as Alex Gottlieb for useful discussions and comments !!
\vskip6pt 

\end{document}